\numberwithin{equation}{subsection}
\newcommand{\HoC}{H\mbox{o}(\cc{C})}
\newcommand{\HoCr}{H\mbox{o}(\cc{C},r)}
\newcommand{\HoCo}{H\mbox{o}(\cc{C}_0)}
\newlength\Colsep
\theoremstyle{definition}
        \newtheorem{remark}[equation]{Remark}
        \newtheorem{example}[equation]{Example}
        \newtheorem{sinnadastandard}[equation]{}
	\newtheorem{notation}[equation]{Notation}
\theoremstyle{plain}
        \newtheorem{theorem}[equation]{Theorem}
        \newtheorem{lemma}[equation]{Lemma}
        \newtheorem{proposition}[equation]{Proposition}
        \newtheorem{corollary}[equation]{Corollary}
	    \newtheorem{definition}[equation]{Definition}
\newcommand{\mr}[1]{\stackrel{#1}{\longrightarrow}}
\newcommand{\ml}[1]{\stackrel{#1}{\longleftarrow}}
\newcommand{\Mr}[1]{\stackrel{#1}{\Rightarrow}}
\newcommand{\xr}[1]{\xrightarrow{#1}}
\newcommand{\cc}[1]{\mathcal{#1}}
\newcommand{\C}{\mathcal{C}}
\newcommand{\eps}{\varepsilon}
\newcommand{\adj}[2]{
\ar@/^1ex/[r]^{{#1}}
\ar@{}[r]|{\bot}
\ar@/_1ex/@{<-}[r]_{{#2}} }
\newcommand{\jda}[2]{
\ar@/^1ex/[r]^{#1}
\ar@{}[r]|{\top}
\ar@/_1ex/@{<-}[r]_{#2} }
\newcommand{\adjbis}[2]{
\ar@/^2ex/[r]^{{#1}}
\ar@{}[r]|{\bot}
\ar@/_2ex/@{<-}[r]_{{#2}} }
\newcommand{\jdabis}[2]{
\ar@/^2ex/[r]^{#1}
\ar@{}[r]|{\top}
\ar@/_2ex/@{<-}[r]_{#2} }
\newcommand{\idavuelta}[2]{
\ar@/^1ex/[r]^{#1}
\ar@/_1ex/@{<-}[r]_{#2} }
\newenvironment{acknowledgements}
  {
   \begin{abstract}}
  {\end{abstract}
   }
\begin{document}
\title{The homotopy relation in a category with weak equivalences}

\author{Martin Szyld}

\date{\vspace{-5ex}}

\maketitle

\begin{abstract}
We define a homotopy relation between arrows of a category with weak equivalences, and give a condition under which the quotient by the homotopy relation yields the homotopy category. 
In the case of the fibrant-cofibrant objects of a model category this condition holds, and we show that our notion of homotopy coincides with the classical one. 
We also 
show that Quillen's construction of the homotopy category of a model category, in which the arrows are homotopical classes of a single arrow between fibrant-cofibrant objects, can be made as well for categories with weak equivalences using this notion of homotopy.
We deduce from our work the saturation of model categories. 
The proofs of these results, which consider only the weak equivalences, become simpler (as it is usually the case) than those who involve the whole structure of a model category. 
\end{abstract}

\begin{acknowledgements}
I am very grateful to Eduardo J. Dubuc, this paper gained its impetus from initial collaboration with him.
\end{acknowledgements}

\section{Introduction}

In order to put our results into context, we found it appropriate to begin this introduction by recalling some thoughts regarding the theory of homotopical categories developed in \mbox{\cite[Part II]{DHKS}.} 
The starting point of this theory is the significant observation (\cite[\S 1]{CartaGroth}, \cite[25]{DHKS}) that {\em the weak equivalences of a model category determine the homotopy theory}. However, quoting still from \cite{DHKS}, 
``many model category arguments are a mix of arguments
which only involve weak equivalences and arguments which also involve cofibrations
and/or fibrations and as these two kinds of arguments have different flavors, the
resulting mix often looks rather mysterious". This is the motivation for developing a theory of categories with a distinguished class of arrows called weak equivalences, 
of which we only ask that it contains the identities and the usual two out of three axiom, 
a theory which allows to isolate the arguments which involve only this class of arrows.
It is in this spirit that the theory of homotopical categories is developed in \cite[VI]{DHKS}. 
The main point of this paper is that this same {\em philosophy} can also be applied to 
the notion of homotopy and to the arguments used to prove Quillen's localization result \cite[Theorem 1]{Quillen}.

For any model category $\C$, denote by $\HoC$ the localization of $C$ with respect to the weak equivalences, and by $\C \mr{\gamma} \HoC$ the localization functor. 
Quillen defines in \mbox{\cite[I.1,Def.4]{Quillen}} the notions or right and left homotopy between arrows of $\C$ by generalizing to this context the usual notion of homotopy, that is by using cylinder and path objects which involve the full structure of the model category, and not just the weak equivalences.
He shows that, when restricted to the fibrant-cofibrant objects, both relations coincide and are in fact a congruence (that is, an equivalence relation between arrows, stable under composition).
He finally constructs, for each object $X$ of $\C$ a 
fibrant-cofibrant object $RQX$ and his localization result states that the arrows $X \mr{} Y$ of $\HoC$ correspond to the classes of arrows $RQX \mr{} RQY$ under the homotopy relation. The arguments in his proof are a clear example of the {\em mix} mentioned in the previous paragraph (which doesn't overshadow its brilliance). In \cite[\S 10]{DHKS}, a presentation of this result is given which depends on the following two independent results:

\begin{sinnadastandard}[{\cite[10.4]{DHKS}}]
\label{sin:indep1}
The inclusion $\C_{cf} \subset \C$ of the full subcategory of fibrant-cofibrant objects induces an equivalence of categories $H\mbox{o}(\C_{cf}) \mr{} \HoC$.
\end{sinnadastandard}

\begin{sinnadastandard}[{\cite[10.6]{DHKS}}] \label{sin:indep2}
The homotopy relation in $\C_{cf}$ is the relation:
$f \sim g$ if and only if $\gamma f = \gamma g$ in $H\mbox{o}(\C_{cf})$, and furthermore 
the induced functor 
$\C_{cf}/\!\sim\ \mr{} H\mbox{o}(\C_{cf})$ is an isomorphism of categories. 
\end{sinnadastandard}

In \cite[10.3]{DHKS} a proof of \ref{sin:indep1} is given which refers only to the weak equivalences, by using the notion of deformation retract. 
However a proof of \ref{sin:indep2} is not given in \cite{DHKS}, furthermore it is described as ``long and technical" and references to ``good versions" which appear in four texts on model categories are provided instead.

\smallskip

{\em 
In this paper, we will give a notion of homotopy with respect to a family of weak equivalences, and a very simple proof of the corresponding version of \ref{sin:indep2}.}

\smallskip

The paper is organized as follows. In Section \ref{sec:prelim} we recall the basic definitions of the theory of homotopical categories, and interpret the notion of localization and the notion of quotient of a category by a congruence between its arrows as the value on objects of two functors which admit adjoints. This will allow us to study in an abstract context the problem of, given a family of arrows $\Sigma$ of a category $\C$, finding a congruence $R$ between the arrows of $\C$ such that its quotient is the localization with respect to $\Sigma$. We do this in \S \ref{sub:31}:

\smallskip 

\noindent -We find equivalent conditions on $\Sigma$ such that the desired $R$ exists, and note that 
there is always a unique possible $R$. We call this $R$ the relation of homotopy with respect to $\Sigma$, and note that one of the found conditions is the statement of a {\em Whitehead theorem} in this context. 

\smallskip 

\noindent -We say that an arrow {\em splits} if it is either a retraction or a section, and that a category with weak equivalences is {\em split-generated} if any weak equivalence is a composition of weak equivalences that split. The example in mind consists of the weak equivalences between fibrant-cofibrant objects in a model category, which as is well-known can be factored as a section followed by a retraction. We show that any split-generated category with weak equivalences satisfies the Whitehead condition mentioned above, from which our version of \ref{sin:indep2} follows. 

\smallskip

Note that by saying ``our version of \ref{sin:indep2}" above we mean that the homotopy relation considered is the relation of homotopy with respect to the weak equivalences introduced in this paper which, though {\em a posteriori} (that is assuming Quillen's localization result) must coincide with the usual notion of homotopy, it is {\em a priori} defined in a rather abstract way. Up to this point, one may say that what we have done is giving an answer (in terms only of the weak equivalences) to the question of {\em why} the homotopy category $H\mbox{o}(\C_{cf})$ is a quotient by a (uniquely determined) congruence: this is so because the family of weak equivalences in $\C_{cf}$ is split-generated.

\smallskip 

\noindent -What we do next is, still in the split-generated case, to give in \S \ref{sub:construction} a concrete description of the homotopy relation. 
It turns out to be the transitive closure of a relation which consists of considering, in Quillen's definition of homotopic arrows, an arbitrary arrow instead of an identity. 
We consider in \S \ref{sub:fork} a condition that is known to hold in the model category case under which this description is simpler, and we finally show in \S \ref{sub:modelcat}, without using Quillen's localization result, that for fibrant-cofibrant objects of a model category case this homotopy relation coincides with the classical one. 

\smallskip 

\noindent
-We say that $\C$ is deformable into $\C_0$ if there is a finite sequence of deformation retracts \mbox{$\C_0 \subset \C_1 ... \subset \C$.} We show in \S \ref{sub:replacement} that in this case we can construct the localization of $\C$ in terms of the homotopy category of $\C_0$.
Combining all our results, it follows a theorem for categories with weak equivalences, which yields Quillen's constructions and results mentioned above when considered for the case of model categories.

\smallskip

\noindent 
-We show finally that from our work it follows that, under some conditions, homotopical categories are saturated.
Since these conditions are known to hold in the model category case, a proof of the relevant result that model categories are saturated is obtained which is completely different to the one in \cite[I,5,Prop.1]{Quillen}. 
As far as we know, our conditions are also  independent to the 3-arrow-calculus axiom which is used in \cite[11.3]{DHKS} to prove saturation.

\section{Preliminaries} \label{sec:prelim}

\subsection{Categories with weak equivalences}\label{sub:catwe}

We recall from \cite{DHKS} various definitions and constructions.

\begin{definition} \label{def:catwithweyhomotcat}
Let $\C$ be a category. 
 We consider the following axioms on a family $\cc{W}$ of arrows of $\C$.
 
\smallskip

\noindent i) $\cc{W}$ contains all the identities.

\smallskip

\noindent ii) $\cc{W}$ has the {\em two out of three property}: for every pair of composable arrows $f,g$, when two of the three arrows $f,g,gf$ are in $\cc{W}$ then so is the third one.

\smallskip

\noindent iii) $\cc{W}$ has the {\em weak invertibility property}: for any arrow $f$ for which there exist arrows $g,h$ such that $fg$ and $hf$ are in $\cc{W}$, we have $f \in \cc{W}$.

A {\em category with weak equivalences} is a pair $(\C,\cc{W})$ satisfying axioms i) and ii). It is a {\em homotopical category} if it also satisfies iii).
\end{definition} 

\begin{definition} \label{def:homotfunctor}
A {\em homotopical functor} between categories with weak equivalences is a functor between the underlying categories which maps the weak equivalences to weak equivalences.

Also, given a category $\cc{D}$, a category with weak equivalences $(\C,\cc{W})$ and functors \mbox{$\cc{D} \xr{F,G} \C$,} a {\em natural weak equivalence} between $F$ and $G$ is a natural transformation $F \Mr{\theta} G$ such that $\theta_D \in \cc{W}$ for each $D \in \cc{D}$.
\end{definition}

\begin{definition} \label{def:deformation}
Given a category with weak equivalences $(\C,\cc{W})$ and a subcategory $\C_0$ of $\C$, a left (resp. right) deformation of $\C$ into $\C_0$ is a pair $(r,\theta)$ for which $\C \mr{r} \C$ is a homotopical functor satisfying $rC \in \C_0$ for each $C \in \C$, and $r \Mr{\theta} id_C$ (resp $id_C \Mr{\theta} r$) is a natural weak equivalence.
\end{definition}

\begin{definition} \label{def:construccionzigzags}
Given a category with weak equivalences $(\C,\cc{W})$, its homotopy category $Ho(\C,\cc{W})$, which we may write $\HoC$, is constructed as follows (for size issues, see for example \cite[\S 32]{DHKS}).
 $\HoC$ has the same objects of $\C$, and its arrows can be constructed by identifying {\em zigzags} of arrows (in which the backwards arrows are weak equivalences) 
when one can be obtained from the other 
by applying the following three operations, and their inverses, a finite number of times:

\smallskip

\noindent i) Omit an identity

\smallskip

\noindent ii) Compose two maps which go in the same direction

\smallskip

\noindent iii) Omit a weak equivalence which appears in both directions (with no other arrow in-between).

$\HoC$ comes equipped with a functor $\C \mr{\gamma} \HoC$ mapping each arrow to the class of its induced zigzag of length 1. 
This functor is the localization of $\C$ with respect to $\cc{W}$, in the sense that it is universal among those that map the arrows of $\cc{W}$ to isomorphisms.
\end{definition}

\subsection{The localization and the quotient adjunctions}

\begin{sinnadastandard} \label{sin:localizationadjunction}
{\bf The localization adjunction.}
Consider an arbitrary family $\Sigma$ of arrows of a category $\cc{C}$, on which we don't assume any conditions right now. We denote the localization functor in this case by $\cc{C} \mr{P_{\Sigma}} \cc{C}[\Sigma^{-1}]$. 
If we now consider the poset $\cc{A}_{\cc{C}}$ of families of arrows of $\cc{C}$ as a category, by the universal property of the localization we have a functor $\cc{A}_{\cc{C}} \mr{P} C \downarrow  \mathbb{C}at$ into the comma category of categories \emph{under} $C$. We also have a functor $C \downarrow  \mathbb{C}at \mr{I} \cc{A}_{\cc{C}}$ which maps a functor $F$ to the family of arrows $\{f \ | \ F(f) \mbox{ is invertible} \}$, and the universal property of the localization states precisely the adjunction $P \dashv I$.
\end{sinnadastandard}

\begin{sinnadastandard} 
 \label{sin:quotientadjunction}
{\bf The quotient adjunction.} We recall (\cite[II.8]{McL}) the construction of the quotient of a category $\cc{C}$ by a precongruence $R$. By a precongruence $R$ we mean for each pair of objects $A,B$ binary relation $R_{A,B}$ on $\cc{C}(A,B)$, which we will denote also by $R$ if there is no risk of confusion. Note that precongruences are a poset, with componentwise inclusion, and thus form a category $\cc{P}_{\cc{C}}$. 
A precongruence $R$ is a congruence if it satisfies simultaneously:

\smallskip

\noindent 1. Each $R_{A,B}$ is an equivalence relation 

\smallskip

\noindent 2. $R$ is closed by composition: given $f R g$ it holds $vfu \!\ R \!\ vgu$ for any arrows $u,v$ such that the compositions can be made.

The quotient of $\cc{C}$ by $R$ is a functor $\cc{C} \mr{Q_R} \cc{C}/R$ which universally identifies related arrows of $\cc{C}$, and 
$\cc{C}/R$ is constructed by taking the quotient of the sets $\cc{C}(A,B)$ by $R'$, the least congruence which contains $R$, 
see \cite[II.8]{McL} for details. We only note, since we will use this fact later, that $R'$ can be constructed as follows: first we close $R$ by composition, by defining $R^c$: 
\begin{equation}\label{eq:Rc}
f \ \! R^c \ \! g \hbox{ if and only if there exist } u,v,f',g' \hbox{ such that } f=vf'u, \, g=vg'u, \, f' R g'.
\end{equation}
Then, for each $A,B$, $R'_{A,B}$ is the least equivalence relation that contains $R^c_{A,B}$. It is an easy exercise that $R'$ is in fact a congruence.

It is immediate from the universal property of the quotient that this construction can be extended to a functor $\cc{P}_{\cc{C}} \mr{Q} C \downarrow  \mathbb{C}at$. 
Now, given any object $\cc{C} \mr{F} \cc{D}$ of $\cc{C} \downarrow  \mathbb{C}at$, we have the congruence $K_F$ given by its kernel pair, which relates two arrows of $\cc{C}$ if and only if they are mapped to the same arrow by $F$. 
It is also immediate to extend this construction to a functor $C \downarrow  \mathbb{C}at \mr{K} \cc{P}_{\cc{C}}$, and the universal property of the quotient $Q_R$ states precisely the adjunction $Q \dashv K$.

The unit $\eta_R$ of the adjunction is the inclusion $R \subseteq K_{Q_R}$. 
Note that, by the construction of $Q_R$, we have that $R$ is a congruence if and only if $R = K_{Q_R}$, i.e. if $\eta_R$ is an isomorphism in $\cc{P}_{\cc{C}}$.
It is immediate to show from these facts that $K_{Q_R}$ is the least congruence which contains $R$. 
Also note that the counit $\eps_F$ of the adjunction is given by the image factorization $\cc{C}/K_F \mr{} \cc{D}$  of $\cc{C} \mr{F} \cc{D}$, and $F$ is a strict epimorpfism if and only if $\eps_F$ is an isomorphism.
\end{sinnadastandard}

\section{Homotopy in a category with weak equivalences} \label{sec:main}

\subsection{The homotopy relation and the Whitehead condition}
\label{sub:31}

\begin{sinnadastandard} \label{sin:situation}
Let $(\C,\cc{W})$ be a category with weak equivalences, and $R$ a precongruence in $\cc{C}$. Motivated by \ref{sin:indep2}, we want to study the problem of giving conditions such that $\cc{C}/R$ is isomorphic to $Ho(\C)$ (as objects of $C \downarrow  \mathbb{C}at$). 
Also, we want to study the problem of the existence of a precongruence $R$ such that the above holds. 
Note that, by the construction of $\C / R$, it suffices to consider the case in which $R$ is a congruence. 
In view of \ref{sin:localizationadjunction}, \ref{sin:quotientadjunction}, we found it convenient to deal with this in the following general situation.
\end{sinnadastandard}

\medskip

Let 
$\xymatrix{ \cc{A} \adj{L}{R} & \cc{B} \jda{G}{F} & \cc{C}}$ be a pair of adjunctions of functors.
We denote by $\eta,\eps$ the unit and counit of $L \dashv R$, and by $\eta',\eps'$ the unit and counit of $F \dashv G$. Let $A \in \cc{A}$, $C \in \cc{C}$. We consider a pair of morphisms of $\cc{B}$, $\xymatrix{ FC \idavuelta{\varphi}{\psi} & LA }$, and its corresponding morphisms of $\cc{A}$ and $\cc{C}$ via the adjunctions:

\noindent\begin{minipage}{\textwidth}
\begin{minipage}[c][6cm][c]{\dimexpr0.5\textwidth-0.5\Colsep\relax}

$$\underline{\quad LA \mr{\psi} FC \quad }$$
\vspace{-.65cm}
$$A \mr{\widetilde{\psi}} RFC$$
\vspace{-.75cm}
\begin{equation}\label{eq:correspadj}
\end{equation}
\vspace{-.8cm}
$$\widetilde{\psi}: A \xr{\eta_A} RLA \xr{R(\psi)} RFC$$
\vspace{-.55cm}
$$\psi: LA \xr{L(\widetilde{\psi})} LRFC \xr{\eps_{FC}} FC$$

\end{minipage}\hfill
\begin{minipage}[c][6cm][c]{\dimexpr0.5\textwidth-0.5\Colsep\relax}

$$\underline{\quad FC \mr{\varphi} LA \quad }$$
\vspace{-.65cm}
$$C \mr{\widetilde{\varphi}} GLA$$
\vspace{-.75cm}
\begin{equation}\label{eq:correspadj2}
\end{equation}
\vspace{-.8cm}
$$\widetilde{\varphi}: C \xr{\eta'_C} GFC \xr{G(\varphi)} GLA$$
\vspace{-.55cm}
$$\varphi: FC \xr{F(\widetilde{\varphi})} FGLA \xr{\eps'_{LA}} LA$$

\end{minipage}%
\end{minipage}

Now, the composition $\psi \varphi$ is the identity of $FC$ if and only if we have the equality $\widetilde{\psi \varphi} = \eta'_C$ between the corresponding morphisms via the adjunction $F \dashv G$, that is if and only if $G(\psi \varphi) \eta'_C = \eta'_C$. By the definitions of $\widetilde{\varphi}$ and $\psi$ above, we have that this is if and only if the following diagram on the left commutes

\begin{equation} \label{eq:diagb1b2}
\xymatrix@C=3pc{C \ar[r]^{\widetilde{\varphi}} \ar[d]_{\eta'_C} & GLA \ar[d]^{GL(\widetilde{\psi})}  \\
GFC & GLRFC \ar[l]_{G(\eps_{FC})}}
\quad \quad
\xymatrix@C=3pc{A \ar[r]^{\widetilde{\psi}} \ar[d]_{\eta_A} & RFC \ar[d]^{RF
(\widetilde{\psi})} \\
RLA & RFGLA \ar[l]_{R(\eps'_{LA})}}
\end{equation} 

Analogously, it can be seen that the composition $\varphi \psi$ is the identity if and only if the diagram above on the right commutes. We have shown:

\begin{proposition} \label{prop:abstracto1}
For 
$\xymatrix{ \cc{A} \adj{L}{R} & \cc{B} \jda{G}{F} & \cc{C}}$, $A \in \cc{A}$, $C \in \cc{C}$, 
a pair of arrows $\xymatrix{FC \idavuelta{\varphi}{\psi} & LA}$ are mutually inverse if and only if the corresponding $\widetilde{\varphi}$, $\widetilde{\psi}$ satisfy \eqref{eq:diagb1b2} above. In particular, 
$FC$ is isomorphic to $LA$ in $\cc{B}$ if and only if there exist $C \mr{\widetilde{\varphi}} GLA$, $A \mr{\widetilde{\psi}} RFC$ satisfying \eqref{eq:diagb1b2}. \qed
\end{proposition}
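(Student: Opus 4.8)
The plan is to translate each of the two equations $\psi\varphi = id_{FC}$ and $\varphi\psi = id_{LA}$ into the commutativity of one of the squares of \eqref{eq:diagb1b2}, using only the hom-set bijections provided by the two adjunctions together with the formulas for them. This is precisely the computation sketched in the paragraph preceding the statement; I would present it in two symmetric halves and then read off the ``in particular'' clause.

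First I would treat $\psi\varphi = id_{FC}$. Under $F \dashv G$ the assignment $h \mapsto G(h)\eta'_C$ is a bijection $\cc{B}(FC,FC) \cong \cc{C}(C,GFC)$ taking $id_{FC}$ to $\eta'_C$, so $\psi\varphi = id_{FC}$ if and only if $G(\psi\varphi)\eta'_C = \eta'_C$. I would then rewrite the left-hand side: by functoriality and the definition of $\widetilde{\varphi}$ one has $G(\psi\varphi)\eta'_C = G(\psi)\bigl(G(\varphi)\eta'_C\bigr) = G(\psi)\widetilde{\varphi}$, and since $\psi = \eps_{FC}L(\widetilde{\psi})$ one has $G(\psi) = G(\eps_{FC})GL(\widetilde{\psi})$. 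Hence $\psi\varphi = id_{FC}$ is equivalent to $G(\eps_{FC})GL(\widetilde{\psi})\widetilde{\varphi} = \eta'_C$, which is exactly the commutativity of the left square of \eqref{eq:diagb1b2}.

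The equation $\varphi\psi = id_{LA}$ is handled the same way with the two adjunctions exchanged. The bijection $h \mapsto R(h)\eta_A$, $\cc{B}(LA,LA) \cong \cc{A}(A,RLA)$, takes $id_{LA}$ to $\eta_A$, so $\varphi\psi = id_{LA}$ iff $R(\varphi)R(\psi)\eta_A = \eta_A$; using $R(\psi)\eta_A = \widetilde{\psi}$ and, from $\varphi = \eps'_{LA}F(\widetilde{\varphi})$, the identity $R(\varphi) = R(\eps'_{LA})RF(\widetilde{\varphi})$, this becomes $R(\eps'_{LA})RF(\widetilde{\varphi})\widetilde{\psi} = \eta_A$, i.e. the commutativity of the right square. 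For the last assertion I would only remark that $\varphi \mapsto \widetilde{\varphi}$ and $\psi \mapsto \widetilde{\psi}$ are themselves the adjunction bijections $\cc{B}(FC,LA) \cong \cc{C}(C,GLA)$ and $\cc{B}(LA,FC) \cong \cc{A}(A,RFC)$, so the existence of a mutually inverse pair $(\varphi,\psi)$, i.e. of an isomorphism $FC \cong LA$ in $\cc{B}$, is equivalent to the existence of a pair $(\widetilde{\varphi},\widetilde{\psi})$ making \eqref{eq:diagb1b2} commute.

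I do not anticipate a real obstacle: the argument is pure bookkeeping with adjunction transposes. The only points that need care are that $\psi\varphi$ is detected by the $F \dashv G$ transpose while $\varphi\psi$ is detected by the $L \dashv R$ transpose, and the elementary fact that the transpose of a composite $FC \xr{\varphi} LA \xr{\psi} FC$ equals $G(\psi)$ precomposed with $\widetilde{\varphi}$ (together with its dual); once these are set up, the two squares of \eqref{eq:diagb1b2} drop out immediately.
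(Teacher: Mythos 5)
Your argument is correct and is essentially the paper's own proof: the computation for $\psi\varphi = id_{FC}$ via the $F \dashv G$ transpose is carried out explicitly in the paragraph preceding the statement, and you have simply written out the second half that the paper dismisses as ``analogous''. (One small point in your favor: your right-hand square correctly carries $RF(\widetilde{\varphi})$ as the vertical arrow, whereas the paper's display \eqref{eq:diagb1b2} misprints it as $RF(\widetilde{\psi})$, which does not even typecheck.)
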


We now consider only $C \in \cc{C}$, and give conditions on the existence of $A$.

\begin{proposition} \label{prop:abstracto2}
For 
$\xymatrix{ \cc{A} \adj{L}{R} & \cc{B} \jda{G}{F} & \cc{C}}$, $C \in \cc{C}$, the following statements are equivalent:

\begin{enumerate}
\item There exist $A \in \cc{A}$ such that $\eta_A$ is an isomorphism and an isomorphism $LA \mr{\psi}  FC$.
\item $LRFC \mr{\eps_{FC}} FC$ is an isomorphism.
\item There exists $C \mr{\widetilde{\varphi}} GLRFC$ such that the diagrams in \eqref{eq:diagb1b2} are commutative with $A = RFC$ and $\widetilde{\psi} = id$.
\end{enumerate}
\end{proposition}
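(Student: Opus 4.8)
The plan is to establish the three equivalences via the two steps $(1)\Leftrightarrow(2)$ and $(2)\Leftrightarrow(3)$. Throughout I would use the triangle identities for $L\dashv R$, namely $\eps_{LA}\circ L(\eta_A)=id_{LA}$ and $R(\eps_B)\circ\eta_{RB}=id_{RB}$, together with two elementary observations. First, the class of objects $B$ of $\cc{B}$ for which $\eps_B$ is an isomorphism is closed under isomorphisms: for $\psi:B\to B'$ the naturality square $\psi\circ\eps_B=\eps_{B'}\circ LR(\psi)$ exhibits $\eps_{B'}$ as a composite of $\eps_B$ with two invertible arrows whenever $\psi$ is invertible. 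Second, this same class contains $LA$ whenever $\eta_A$ is an isomorphism, since then $L(\eta_A)$ is invertible and the first triangle identity forces $\eps_{LA}=L(\eta_A)^{-1}$.

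For $(1)\Rightarrow(2)$ I would argue that, given $A$ with $\eta_A$ an isomorphism and an isomorphism $\psi:LA\to FC$, the two observations above show successively that $\eps_{LA}$ is an isomorphism and hence that $\eps_{FC}$ is one. For $(2)\Rightarrow(1)$ I would take $A:=RFC$: the second triangle identity $R(\eps_{FC})\circ\eta_{RFC}=id_{RFC}$ shows that $\eta_{RFC}$ is invertible as soon as $\eps_{FC}$ is (being then the inverse of the isomorphism $R(\eps_{FC})$), and $\psi:=\eps_{FC}:LRFC=LA\to FC$ is the required isomorphism.

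For $(2)\Leftrightarrow(3)$ I would specialize Proposition \ref{prop:abstracto1} to $A:=RFC$ and $\widetilde{\psi}:=id_{RFC}$. By the formula expressing $\psi$ in terms of $\widetilde{\psi}$ in \eqref{eq:correspadj}, the arrow $\psi:LRFC\to FC$ corresponding to $\widetilde{\psi}=id$ is precisely $\eps_{FC}$. Hence Proposition \ref{prop:abstracto1} says: for $\widetilde{\varphi}:C\to GLRFC$ with transpose $\varphi:FC\to LRFC$ under $F\dashv G$, the diagrams in \eqref{eq:diagb1b2} (with this $A$ and this $\widetilde{\psi}$) commute if and only if $\varphi$ and $\eps_{FC}$ are mutually inverse. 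Therefore statement (3), the existence of such a $\widetilde{\varphi}$, is equivalent to the existence of a two-sided inverse of $\eps_{FC}$, i.e.\ to statement (2); the only thing to note when passing from (2) to (3) is that if $\eps_{FC}$ is invertible one may take $\widetilde{\varphi}$ to be the transpose of $\varphi:=\eps_{FC}^{-1}$.

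I do not expect a genuine obstacle here: the content is bookkeeping with the triangle identities and naturality of $\eps$, plus the observation that the specialization $\widetilde{\psi}=id$ turns $\psi$ into the counit $\eps_{FC}$, so that the already-proved Proposition \ref{prop:abstracto1} yields $(2)\Leftrightarrow(3)$ almost immediately. The only place calling for a little care is checking in $(1)\Rightarrow(2)$ that both the triangle identity and the naturality square are being applied in the right direction.
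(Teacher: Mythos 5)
Your proof is correct and follows essentially the same route as the paper: the step between conditions 2 and 3 is exactly the paper's (specialize Proposition \ref{prop:abstracto1} to $A=RFC$, $\widetilde{\psi}=id$, so that $\psi=\eps_{FC}$), and your handling of $1\Leftrightarrow 2$ via the triangle identities and naturality of $\eps$ is just an unpacked version of the paper's appeal to the transposition formulas in \eqref{eq:correspadj}. The only organizational difference is that you prove two biconditionals where the paper closes the cycle $1\Rightarrow 2\Rightarrow 3\Rightarrow 1$; this is immaterial.
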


\begin{proof} 
\begin{enumerate}
\item[$1\Rightarrow 2 \!$] 
Consider an isomorphism $LA \mr{\psi} FC$.
By the formulas in \eqref{eq:correspadj}, since $\psi$ and $\eta_A$ are isomorphisms, then so are $\widetilde{\psi}$ and $\eps_{FC}$.

\item[$2\Rightarrow 3$] 
Apply Proposition \ref{prop:abstracto1} with $A =  RFC$ and $\psi = \eps_{FC}$.

\item[$1\Rightarrow 2$]
Take $A = RFC$. By Proposition \ref{prop:abstracto1}, we have the desired isomorphism $LA \mr{\psi} FC$. By the formulas in \eqref{eq:correspadj}, since $\psi$ and $\widetilde{\psi}$ are isomorphisms, then so is $\eta_A$.
\end{enumerate}
\vspace{-.5cm}
\end{proof}

\begin{remark} \label{rem:unicoA}
From the proof of $1 \Rightarrow 2$ above, it follows that if $A$ satisfies condition $1$ then $\widetilde{\psi}$ is an isomorphism between $A$ and $RFC$.
\end{remark}

We now go back to our situation in \ref{sin:situation}. We will apply our results 
to the pair of adjunctions $\xymatrix{ \cc{P}_{\cc{C}} \: \adjbis{Q}{K} & \: \cc{C}\downarrow \mathbb{C}at \: \jdabis{I}{P} & \: \cc{A}_{\cc{C}}}$.

\begin{notation} \label{not:sigmarho}
To avoid the appearance of too many subindexes, for any precongruence $R$ we will denote the family of arrows $I_{Q_{R}}$ by $\sigma R$, and for any family of arrows $\Sigma$ we will denote the congruence $K_{P_{\Sigma}}$ by $\rho \Sigma$. Also, for arrows $f,g$ of $\C$, we will write $f \sim_\Sigma g$ to denote that $f$ is related with $g$ via $\rho \Sigma$. Note that by definition we have:

\smallskip

\noindent 1.  $\sigma R = \{X \mr{f} Y \ | \ \exists \ Y \mr{g} X, \, gf \ \! R' \ \! id_X, \, fg \ \! R' \ \! id_Y \}$, where $R'$ is the least congruence which contains $R$.  

\smallskip 

\noindent 2. $f \sim_\Sigma g$ if and only if $P_\Sigma \!\ f = P_\Sigma \!\ g$.
\end{notation}

From proposition \ref{prop:abstracto1} it follows:

\begin{corollary} \label{coro:concreto1}
For any precongruence $R$ in $\cc{C}$, and any family $\Sigma$ of arrows of $\cc{C}$, 
$Q_R$ and $P_\Sigma$ are isomorphic in $\cc{C}\downarrow \mathbb{C}at$ if and only if $\Sigma \subseteq \sigma R$ and $R \subseteq \rho\Sigma$. \qed
\end{corollary}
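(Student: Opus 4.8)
The plan is to unwind Proposition \ref{prop:abstracto1} for the concrete pair of adjunctions $Q \dashv K$ and $P \dashv I$ of \ref{sin:quotientadjunction} and \ref{sin:localizationadjunction}. First I would match the data: in the notation of Proposition \ref{prop:abstracto1} I take $\cc{A} = \cc{P}_\cc{C}$, $\cc{B} = \cc{C}\downarrow\mathbb{C}at$, and the remaining category to be $\cc{A}_\cc{C}$, with $L = Q$ and its right adjoint $= K$, and with $F = P$, $G = I$; then I specialize the objects to $A = R$, a precongruence, and to $\Sigma$ (playing the role of the object called $C$ there). With these identifications $LA$ becomes the object $Q_R$ of $\cc{C}\downarrow\mathbb{C}at$ and $FC$ becomes $P_\Sigma$, while by Notation \ref{not:sigmarho} one has $GLA = I_{Q_R} = \sigma R$ and $RFC = K_{P_\Sigma} = \rho\Sigma$.

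Next I would invoke the ``in particular'' clause of Proposition \ref{prop:abstracto1}: $Q_R$ and $P_\Sigma$ are isomorphic in $\cc{C}\downarrow\mathbb{C}at$ if and only if there exist an arrow $\widetilde\varphi\colon \Sigma \to \sigma R$ in $\cc{A}_\cc{C}$ and an arrow $\widetilde\psi\colon R \to \rho\Sigma$ in $\cc{P}_\cc{C}$ making the two diagrams of \eqref{eq:diagb1b2} commute.

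The key, and essentially only, observation is then that both $\cc{A}_\cc{C}$ and $\cc{P}_\cc{C}$ are posets, ordered by inclusion and by componentwise inclusion respectively. Hence an arrow $\Sigma \to \sigma R$ exists exactly when $\Sigma \subseteq \sigma R$, an arrow $R \to \rho\Sigma$ exists exactly when $R \subseteq \rho\Sigma$, and, since every vertex of each square in \eqref{eq:diagb1b2} lies in one of these two posets, both squares commute automatically as soon as those arrows are available. Combining this with the previous step gives the stated equivalence. I expect no genuine obstacle: the whole argument is a matter of aligning the abstract statement with the concrete adjunctions and noting that commutativity of the squares in \eqref{eq:diagb1b2} is free in a poset, so Proposition \ref{prop:abstracto1} collapses to the bare existence of the two comparison inclusions.
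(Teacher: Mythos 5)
Your proposal is correct and is exactly the argument the paper intends: the corollary is stated as an immediate consequence of Proposition \ref{prop:abstracto1} applied to the adjunctions $Q \dashv K$ and $P \dashv I$, and your identifications ($GLA = \sigma R$, $RFC = \rho\Sigma$) together with the observation that $\cc{A}_{\cc{C}}$ and $\cc{P}_{\cc{C}}$ are posets (so the diagrams \eqref{eq:diagb1b2} commute for free and existence of the arrows $\widetilde\varphi$, $\widetilde\psi$ reduces to the two inclusions) supply precisely the details the paper leaves implicit.
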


For a category with weak equivalences $(\C,\cc{W})$, note that to state that $Q_R$ and $P_\cc{W}$ are isomorphic in $\cc{C}\downarrow \mathbb{C}at$ means that there is a commutative diagram $\vcenter{\xymatrix@R=1pc{ & \HoC \ar@<-1ex>[dd]_{\varphi} \\
\cc{C} \ar[ru]^{\gamma} \ar[rd]_{Q_R} \\
& \cc{C}/R \ar@<-1ex>[uu]_{\psi}   }}$ in which $\varphi$ and $\psi$ are mutually inverse functors induced by the universal properties of the involved constructions (cf. \ref{sin:indep2}).
From Proposition \ref{prop:abstracto2} it follows:

\begin{corollary} \label{coro:concreto2}
For any family $\Sigma$ of arrows of $\cc{C}$, the following statements are equivalent:

\begin{enumerate}
\item There exists a congruence $R$ such that $Q_R$ and $P_\Sigma$ are isomorphic in $\cc{C}\downarrow \mathbb{C}at$.

\item $P_{\Sigma}$ is a strict epimorphism.

\item $\Sigma \subseteq \sigma \rho \Sigma$. \qed
\end{enumerate}
\end{corollary}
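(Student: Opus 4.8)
The plan is to deduce this corollary as a direct instance of Proposition \ref{prop:abstracto2}, applied to the pair of adjunctions $\xymatrix{ \cc{P}_{\cc{C}} \: \adjbis{Q}{K} & \: \cc{C}\downarrow \mathbb{C}at \: \jdabis{I}{P} & \: \cc{A}_{\cc{C}}}$ introduced above. First I would fix the dictionary: take the object ``$C$'' of Proposition \ref{prop:abstracto2} to be the family $\Sigma$, regarded as an object of $\cc{A}_{\cc{C}}$, and let the object ``$A$'' range over the precongruences $R \in \cc{P}_{\cc{C}}$. Then $FC = P_\Sigma$, $RFC = K_{P_\Sigma} = \rho\Sigma$, $LRFC = Q_{\rho\Sigma}$, $GLRFC = I_{Q_{\rho\Sigma}} = \sigma\rho\Sigma$, and $LA = Q_R$. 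With these identifications in hand, it remains only to verify that the three statements of Proposition \ref{prop:abstracto2} unwind into the three statements of the corollary.

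The easiest to check is statement $2$. By the description of the quotient adjunction recalled in \ref{sin:quotientadjunction}, the counit $\eps_{FC} = \eps_{P_\Sigma}$ of $Q \dashv K$ at $P_\Sigma$ is exactly the image factorization $\cc{C}/\rho\Sigma \mr{} \cc{C}[\Sigma^{-1}]$ of $P_\Sigma$, and this is an isomorphism if and only if $P_\Sigma$ is a strict epimorphism; so statement $2$ of the proposition is statement $2$ of the corollary. For statement $1$, I would use, again from \ref{sin:quotientadjunction}, that the unit $\eta_R$ of $Q \dashv K$ is the inclusion $R \subseteq K_{Q_R}$, which is an isomorphism in the poset $\cc{P}_{\cc{C}}$ precisely when $R$ is a congruence; hence ``there exists $A$ with $\eta_A$ an isomorphism together with an isomorphism $LA \mr{\psi} FC$'' says exactly that there is a congruence $R$ with $Q_R$ and $P_\Sigma$ isomorphic in $\cc{C}\downarrow\mathbb{C}at$, which is statement $1$ of the corollary.

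The point requiring a small observation is statement $3$, and this is where I expect the only (mild) subtlety to lie. Here $A = RFC = \rho\Sigma$ is automatically a congruence, being a kernel pair, and $\widetilde{\psi} = id$ is the unique morphism $\rho\Sigma \to \rho\Sigma$; moreover the two diagrams of \eqref{eq:diagb1b2} live in the posets $\cc{A}_{\cc{C}}$ and $\cc{P}_{\cc{C}}$, so they commute automatically. Thus the whole content of statement $3$ reduces to the \emph{existence} of a morphism $C \mr{\widetilde{\varphi}} GLRFC$ in $\cc{A}_{\cc{C}}$, that is, to the inclusion $\Sigma \subseteq \sigma\rho\Sigma$, which is statement $3$ of the corollary. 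Beyond this poset observation there is no genuine obstacle: the mathematical content has already been isolated in Proposition \ref{prop:abstracto2}, and the remaining work is the bookkeeping of the identifications through \ref{sin:localizationadjunction} and \ref{sin:quotientadjunction}, together with keeping track of the directions of the adjunction arrows.
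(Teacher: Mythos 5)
Your proposal is correct and is exactly the paper's intended argument: the corollary is stated with a \qed precisely because it is the instantiation of Proposition \ref{prop:abstracto2} at the pair of adjunctions $Q \dashv K$, $P \dashv I$, with the dictionary you describe and with the translations of the unit and counit conditions already recorded in \ref{sin:quotientadjunction}. Your observation that the diagrams \eqref{eq:diagb1b2} commute automatically because $\cc{A}_{\cc{C}}$ and $\cc{P}_{\cc{C}}$ are posets is the same point that makes Corollary \ref{coro:concreto1} immediate, so nothing is missing.
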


\begin{remark} \label{rem:unicoAaplicado} 
From Remark \ref{rem:unicoA}, it follows that $\rho \Sigma$ is the unique congruence which  may satisfy condition 1. 

By the construction of $\C / R$, we have that it is also equivalent to ask in item 1 above for the existence of a precongruence $R$ (and in this case 
$\rho \Sigma$ will be the least congruence which contains $R$).
\end{remark}

\begin{remark} \label{rem:condition2}
Note that by definition the item 2 above states that the induced functor $\C / \rho_{\Sigma} \mr{} \HoC$ is an isomorphism of categories. Since by construction this functor is the identity on objects (thus surjective), and it is faithful, then condition 2 is equivalent to stating that it is full, i.e. that any zigzag of arrows of $\C$ as in Definition \ref{def:construccionzigzags} is in the same class as a zigzag of length one.
\end{remark}

Let $(\C,\cc{W})$ be a category with weak equivalences. By analogy with the classical case, we make the following definition

\begin{definition} \label{def:homotencatwe}
We 
define the relation $\rho \cc{W}$ as the {\em homotopy relation}, we denote it also by $\sim_\cc{W}$, 
and when $f \sim_\cc{W} g$ we say that they are 
{\em homotopical} arrows. By the homotopical class of an arrow, we refer to its class in $\C /  \sim_\cc{W}$.
\end{definition}

\begin{remark}\label{rem:rhoSigmasinlocalization}
Note that, for any pair of arrows $f,g$, we have that $f \sim_\cc{W} g$ if and only if, for any functor $\C \mr{F} \cc{D}$ which maps the weak equivalences to isomorphisms, we have $Ff = Fg$. This is an alternative definition of the homotopy relation $\rho\cc{W}$ which doesn't require a construction of the localization, however assuming the existence of the localization simplifies the proofs.
\end{remark}

\begin{remark} \label{rem:unicoAaplicadoaplicado}
For any congruence $\sim$ which 
induces an isomorphism of categories
\mbox{$\C /\!\sim\ \mr{} H\mbox{o}(\C)$} (cf. \ref{sin:indep2}), by Remark \ref{rem:unicoAaplicado} we have $\sim \ = \ \sim_\cc{W}$, in other words the homotopy relation is the only possible candidate for a congruence such that the homotopy category is isomorphic to the quotient by it.
\end{remark}

\begin{remark} \label{rem:whitehead}
Note that the family $\sigma \rho \cc{W}$ is by construction (see Notation \ref{not:sigmarho}) the family of {\em homotopical equivalences}, that is the family of arrows $X \mr{f} Y$ of $\C$ such that there exists $Y \mr{g} X$ with $gf \sim_\cc{W} id_X$, $fg \sim_\cc{W} id_Y$. Item 3 in the previous Corollary is thus the statement of a Whitehead condition (see for example \cite[Th.1.10]{GJ}) for $(\C,\cc{W})$ in this context: it states that every weak equivalence is a homotopical equivalence. 
\end{remark}

\begin{definition}
We say that a category with weak equivalences $(\C,\cc{W})$ satisfies the Whitehead condition, or for short that it is Whitehead, if it satisfies the equivalent conditions of Corollary \ref{coro:concreto2}.
\end{definition}

We describe now an important class of Whitehead categories with weak equivalences.

\begin{definition} \label{def:retrsect}
Let $X \mr{s} Y$, $Y \mr{r} X$ be arrows in a category. If $rs = id_X$, $s$ is called a section for $r$, and $r$ is called a retraction for $s$.
An arrow $X \mr{s} Y$ is called a section if there exists $r$ such that $s$ is a section for $r$ and dually an arrow is called a retraction if it admits a section. An arrow that is either a section or a retraction is called a split arrow.
\end{definition}

\begin{example} \label{ex:retrsect}
We recall the following axioms from Quillen's theory of model categories (see \cite[I,1,Def.1]{Quillen} for the complete definition):

\smallskip

\noindent {\bf M1.} For any solid arrow diagram $\vcenter{\xymatrix{\cdot \ar[r] \ar[d]_i & \cdot \ar[d]^p \\
\cdot \ar[r] \ar@{.>}[ru] & \cdot
}}$ where $i$ is a cofibration, $p$ is a fibration and one of them is a weak equivalence, the dotted arrow exists.

\smallskip

\noindent {\bf M2.} Any map $f$ can be factored as a composition $pi$, where $i$ is a cofibration, $p$ is a fibration, and we may choose any one of them we prefer to be a weak equivalence. Note that if $f$ is a weak equivalence, we may take both $i$ and $p$ to be so.

If $A$ is a fibrant object and $A \mr{i} B$ is a cofibration and a weak equivalence, then using axiom {\bf M1} (see \S \ref{sub:modelcat}) we have $\vcenter{\xymatrix{A \ar[r]^{id} \ar[d]_i & A \ar[d] \\
B \ar[r] \ar@{.>}[ru]^{r} & 1
}}$ and thus $i$ is a section. Dually, if $B$ is a cofibrant object and $A \mr{p} B$ is a fibration and a weak equivalence then it is a section. Using axiom {\bf M2}, it follows that any weak equivalence between fibrant-cofibrant objects can be factored as a section followed by a retraction, both of them weak equivalences. Note that this fact is used in \cite[Proof of Th. 1.10]{GJ} in order to prove Whitehead's theorem for model categories.
\end{example}

\begin{definition} 
A category with weak equivalences
$(\C,\cc{W})$
 is {\em split-generated} if any arrow of $\cc{W}$ can be written as a composition of weak equivalences that split.
\end{definition}

\begin{proposition} \label{prop:splitiswhitehad}
Any split-generated category with weak equivalences is Whitehead.
\end{proposition}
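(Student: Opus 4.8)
The plan is to verify condition 3 of Corollary \ref{coro:concreto2}, namely that $\cc{W} \subseteq \sigma\rho\cc{W}$; by Remark \ref{rem:whitehead} this is precisely the assertion that every weak equivalence is a homotopical equivalence. Since, by hypothesis, every arrow of $\cc{W}$ is a composite of weak equivalences that split, it will suffice to prove two things: (a) the family $\sigma\rho\cc{W}$ of homotopical equivalences is closed under composition; and (b) every split weak equivalence is a homotopical equivalence. Granting these, any arrow of $\cc{W}$, being a composite of split weak equivalences, is a composite of homotopical equivalences and hence a homotopical equivalence, which is condition 3.

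For (a) I would recall from Notation \ref{not:sigmarho} that $\sigma\rho\cc{W} = I_{Q_{\rho\cc{W}}}$ is the family of arrows inverted by the quotient functor $\cc{C} \mr{Q_{\rho\cc{W}}} \cc{C}/\rho\cc{W}$; for any functor, the class of arrows it inverts is a subcategory, in particular closed under composition. (Alternatively one argues by hand using that $\rho\cc{W} = \sim_\cc{W}$ is a congruence: if $gf \sim_\cc{W} id$, $fg \sim_\cc{W} id$, $g'f' \sim_\cc{W} id$, $f'g' \sim_\cc{W} id$ with $f,f'$ composable, then $(gg')(f'f) = g(g'f')f \sim_\cc{W} g\,id\,f = gf \sim_\cc{W} id$, and symmetrically for the other composite.)

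For (b) let $w$ be a split weak equivalence. Since $w \in \cc{W}$, $\gamma w$ is an isomorphism in $\HoC$, where $\gamma = P_\cc{W}$. If $w$ is a section, say $rw = id$, then $\gamma r \cdot \gamma w = id$, so $\gamma r = (\gamma w)^{-1}$ and therefore $\gamma w \cdot \gamma r = id$, i.e. $\gamma(wr) = \gamma(id)$; by Notation \ref{not:sigmarho}(2) this gives $wr \sim_\cc{W} id$, while $rw = id$ gives $rw \sim_\cc{W} id$ trivially, so $w$ is a homotopical equivalence with witness $g = r$. If instead $w$ is a retraction, say $ws = id$, the dual computation in $\HoC$ yields $sw \sim_\cc{W} id$ together with $ws = id \sim_\cc{W} id$, so $w$ is a homotopical equivalence with witness $g = s$. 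Combining (a) and (b) establishes $\cc{W} \subseteq \sigma\rho\cc{W}$, so $(\C,\cc{W})$ is Whitehead.

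The only mildly delicate point is in (b): one must upgrade the one-sided identity $rw = id$ (resp. $ws = id$) to the two-sided homotopy relations needed in the definition of a homotopical equivalence, and this is exactly where invertibility of $\gamma w$ — hence the hypothesis $w \in \cc{W}$ — is used. Everything else is formal bookkeeping with the already-established correspondences $\sigma R = I_{Q_R}$ and $f \sim_\cc{W} g \iff \gamma f = \gamma g$.
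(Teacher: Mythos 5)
Your proof is correct and follows essentially the same route as the paper's verification of condition 3 of Corollary \ref{coro:concreto2}: reduce to split weak equivalences and use the invertibility of $\gamma w$ in $\HoC$ to upgrade the one-sided identity to the two-sided homotopy relations (the paper phrases this via $\gamma(srs)=\gamma(s)$ and cancellation, which is the same computation). You make explicit the closure of homotopical equivalences under composition, which the paper leaves implicit, and you omit the paper's additional (``instructive'' but redundant) direct verification of condition 2 via zigzag manipulations.
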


\begin{proof}
We consider it instructive to show that in this case $(\C,\cc{W})$ satisfies both the condition 2 and the condition 3 of Corollary \ref{coro:concreto2}.

To show condition 2, by Remark \ref{rem:condition2} it suffices to show that any backward arrow is equivalent, by the relation described in Definition \ref{def:construccionzigzags}, to a forward arrow. Consider a pair of arrows $X \mr{s} Y$, $Y \mr{r} X$ such that $rs = id_X$. Then we have:

\smallskip

\noindent -If $s$ is a weak equivalence, then 
$Y \ml{s} X \,\, \sim \,\, Y \ml{s} X \mr{s} Y \mr{r} X \,\, \sim \,\, Y \mr{r} X$.

\smallskip

\noindent -If $r$ is a weak equivalence, then 
$X \ml{r} Y \,\, \sim \,\, X \mr{s} Y \mr{r} X \ml{r} Y \,\, \sim \,\, X \mr{s} Y$.

Since any weak equivalence is a composition of finite such $s$ and $r$, we conclude.

\smallskip

To show condition 3, it suffices to show that weak equivalences that split are homotopical equivalences. By the two out of three property, we can consider a pair of weak equivalences $X \mr{s} Y$, $Y \mr{r} X$ such that $rs = id_X$, and it suffices to check that $sr \sim_\cc{W} id_Y$, i.e. that $\gamma(sr) = id_Y$. 
Since $srs = s$, we have $\gamma(srs) = \gamma(s)$ from which the desired equality follows because $\gamma(s)$ is an isomorphism.
\end{proof}

Let $\C_{cf}$ be the full subcategory of fibrant-cofibrant objects of a model category $\C$, we denote by the same letter $\cc{W}$ the family of weak equivalences when restricted to $\C_{cf}$.  
By the result in Example \ref{ex:retrsect}, 
$(\C_{fc},\cc{W})$ is split-generated and thus we have:

\begin{corollary} \label{modeliswhitehead}
For any model category $\C$, the category with weak equivalences $(\C_{fc},\cc{W})$ is Whitehead. \qed
\end{corollary}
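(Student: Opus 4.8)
The plan is to obtain the statement as an immediate consequence of Proposition \ref{prop:splitiswhitehad}: it suffices to check that $(\C_{fc},\cc{W})$ is split-generated, and in fact I expect that every weak equivalence between fibrant-cofibrant objects is a composite of just two weak equivalences that split.

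First I would record the two lifting arguments indicated in Example \ref{ex:retrsect}. If $A$ is fibrant and $A \mr{i} B$ is a trivial cofibration, then applying axiom {\bf M1} to the square with $i$ on the left and $B \to 1$ on the right (noting that $i$ is the weak equivalence) yields $r$ with $ri = id_A$, so $i$ is a section; by two out of three $r \in \cc{W}$. Dually, if $B$ is cofibrant and $A \mr{p} B$ is a trivial fibration, lifting $id_B$ against the map $\emptyset \to A$ from the initial object exhibits $p$ as a retraction of a trivial cofibration.

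Now take an arbitrary $X \mr{w} Y$ in $\cc{W}$ with $X, Y \in \C_{fc}$. Using axiom {\bf M2} I would factor $w = pi$ with $X \mr{i} Z$ a trivial cofibration and $Z \mr{p} Y$ a trivial fibration. Then $Z$ is again fibrant-cofibrant: cofibrant because $i$ is a cofibration with cofibrant domain $X$, and fibrant because $p$ is a fibration with fibrant codomain $Y$. Since $X$ is fibrant the first step makes $i$ a section, since $Y$ is cofibrant the second step makes $p$ a retraction, and both $i$ and $p$ lie in $\cc{W}$. Hence $w$ is a composite of two split weak equivalences, so $(\C_{fc},\cc{W})$ is split-generated, and Proposition \ref{prop:splitiswhitehad} applies.

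I do not anticipate a serious obstacle: the only genuinely model-categorical input is the pair of lifting arguments together with the observation that the middle object $Z$ of the {\bf M2}-factorization stays in $\C_{fc}$ --- precisely the content isolated in Example \ref{ex:retrsect} --- after which the deduction is purely formal via Proposition \ref{prop:splitiswhitehad}.
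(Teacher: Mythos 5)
Your proposal is correct and follows exactly the paper's route: the lifting arguments of Example \ref{ex:retrsect} show that any weak equivalence in $\C_{fc}$ factors as a section followed by a retraction (both weak equivalences, with the intermediate object of the {\bf M2}-factorization remaining fibrant-cofibrant), so $(\C_{fc},\cc{W})$ is split-generated and Proposition \ref{prop:splitiswhitehad} applies. The only quibble is the phrase ``retraction of a trivial cofibration'' --- the lift $s$ with $ps=id_B$ need not be a cofibration, but all you need is that $p$ admits a section, which your square does give.
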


Thus, from Corollary \ref{coro:concreto2} and Remark \ref{rem:unicoAaplicadoaplicado} we have (cf. \ref{sin:indep2}, note that by definition we have $f \sim_\cc{W} g$ if and only if $\gamma f = \gamma g$ in $H\mbox{o}(\C_{cf})$)

\begin{corollary} \label{coro:cfindep2}
For any model category $\C$,
$\sim_\cc{W}$ is the unique congruence in $\C_{cf}$ such that 
 the induced functor 
$\C_{cf}/\!\sim_\cc{W} \mr{} H\mbox{o}(\C_{cf})$ is an isomorphism of categories. \qed
\end{corollary}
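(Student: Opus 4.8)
The plan is to obtain this as a direct consequence of Corollary~\ref{modeliswhitehead} together with the uniqueness already recorded in Remark~\ref{rem:unicoAaplicadoaplicado}, so no genuinely new argument is needed. First I would invoke Corollary~\ref{modeliswhitehead}: the category with weak equivalences $(\C_{cf},\cc{W})$ is Whitehead, hence it satisfies condition~$1$ of Corollary~\ref{coro:concreto2}, i.e.\ there is a congruence $R$ on $\C_{cf}$ with $Q_R$ and $P_\cc{W}$ isomorphic as objects of $\C_{cf}\downarrow\mathbb{C}at$. By Remark~\ref{rem:unicoAaplicado}, applied exactly as in Remark~\ref{rem:unicoAaplicadoaplicado}, the only congruence that can enjoy this property is $\rho\cc{W}=\ \sim_\cc{W}$; in particular $R=\ \sim_\cc{W}$, so such a congruence exists and is unique.

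Next I would translate the condition ``$Q_R$ and $P_\cc{W}$ are isomorphic in $\C_{cf}\downarrow\mathbb{C}at$'' into the form stated in the corollary, following the paragraph right after Corollary~\ref{coro:concreto1}: since $\gamma=P_\cc{W}$ is the localization $\C_{cf}\mr{}H\mbox{o}(\C_{cf})$, this is precisely the assertion that the canonical functor $\C_{cf}/\!\sim_\cc{W}\ \mr{}\ H\mbox{o}(\C_{cf})$ induced by the universal properties is an isomorphism of categories, with inverse the functor induced in the other direction. I would also recall, as flagged parenthetically in the statement, that by item~$2$ of Notation~\ref{not:sigmarho} the relation $\sim_\cc{W}$ is exactly $f\sim g$ iff $\gamma f=\gamma g$ in $H\mbox{o}(\C_{cf})$, which makes the comparison with \ref{sin:indep2} transparent.

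For the uniqueness clause I would run this equivalence backwards: if $\sim$ is any congruence on $\C_{cf}$ for which the induced functor $\C_{cf}/\!\sim\ \mr{}\ H\mbox{o}(\C_{cf})$ is an isomorphism, then by the translation above $Q_\sim$ and $P_\cc{W}$ are isomorphic objects of $\C_{cf}\downarrow\mathbb{C}at$, so $\sim$ satisfies condition~$1$ of Corollary~\ref{coro:concreto2}, and Remark~\ref{rem:unicoAaplicadoaplicado} forces $\sim\ =\ \sim_\cc{W}$.

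I do not anticipate a real obstacle: all the work has been done in Corollaries~\ref{modeliswhitehead} and~\ref{coro:concreto2} and in Remark~\ref{rem:unicoAaplicadoaplicado}. The only point requiring a little care is checking that ``isomorphism of categories over $\C_{cf}$'' and ``isomorphic as objects of $\C_{cf}\downarrow\mathbb{C}at$'' mean the same thing here --- which they do, since a morphism in $\C_{cf}\downarrow\mathbb{C}at$ from $\gamma$ to $Q_R$ is exactly a functor commuting with these two, and it is invertible in the comma category precisely when it is invertible as a functor.
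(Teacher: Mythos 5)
Your proposal is correct and takes essentially the same route as the paper: the corollary is obtained directly by combining Corollary~\ref{modeliswhitehead} (the Whitehead property of $(\C_{cf},\cc{W})$, giving condition~1 of Corollary~\ref{coro:concreto2}) with the uniqueness statement of Remark~\ref{rem:unicoAaplicadoaplicado}, together with the translation of ``isomorphic in $\C_{cf}\downarrow\mathbb{C}at$'' into the form stated. Your extra care about identifying $\sim_\cc{W}$ via item~2 of Notation~\ref{not:sigmarho} matches the parenthetical remark the paper makes just before the corollary, so nothing is missing.
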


\subsection{A construction of the homotopy relation} \label{sub:construction}

We fix throughout this subsection a split-generated category with weak equivalences $(\C,\cc{W})$. We will give a concrete description of the homotopy relation $\sim_\cc{W}$ in this case.

\begin{definition}
We define two precongruences $R_\ell$, $R_r$ in $\C$ as follows: for $A \mr{f,g} B$, 

\smallskip

\noindent-$f \ \! R_\ell \ \! g$ if and only if there exists a weak equivalence $B \mr{\sigma} C$ such that $\sigma f = \sigma g$. 

\smallskip

\noindent-$f \ \! R_r \ \! g$ if and only if there exists a weak equivalence $C \mr{s} A$ such that $f s = g s$. 

We denote by $\sim_\ell$, resp $\sim_r$, the least congruence that contains $R_\ell$, resp. $R_r$. When $f \sim_\ell g$, resp. $f \sim_r g$, we say that $f$ and $g$ are left, resp. right homotopic.
\end{definition}

Note that $f \ \! R_r \ \! g$ if and only if $f \ \! R_\ell \ \! g$ when considered in the opposite category with weak equivalences. We will consider many times below only the relation of left homotopic arrows, but dual ``right" statements which we omit always hold with dual proofs.

\begin{proposition} \label{prop:3congruencescoincide}
The congruences $\sim_\ell$, $\sim_r$ and $\sim_\cc{W}$ all coincide.
\end{proposition}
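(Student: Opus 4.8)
The plan is to prove $\sim_\ell = \sim_\cc{W}$; the equality $\sim_r = \sim_\cc{W}$ then follows by the dual argument, since $R_r$ is $R_\ell$ read in the opposite category with weak equivalences. To obtain $\sim_\ell = \sim_\cc{W}$ I would use the machinery of \S\ref{sub:31}: by Corollary \ref{coro:concreto1} (with $R = \sim_\ell$ and $\Sigma = \cc{W}$) it suffices to verify the two inclusions $\sim_\ell \subseteq \rho\cc{W} = \sim_\cc{W}$ and $\cc{W} \subseteq \sigma(\sim_\ell)$. These give that $Q_{\sim_\ell}$ and $P_\cc{W}$ are isomorphic in $\C\downarrow\mathbb{C}at$, i.e.\ that the induced functor $\C/\sim_\ell \mr{} \HoC$ is an isomorphism of categories, and then, $\sim_\ell$ being a congruence, Remark \ref{rem:unicoAaplicadoaplicado} forces $\sim_\ell = \sim_\cc{W}$.

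The first inclusion is immediate: if $f\, R_\ell\, g$, say $\sigma f = \sigma g$ with $\sigma \in \cc{W}$, then applying the localization functor $\gamma$ and cancelling the isomorphism $\gamma\sigma$ gives $\gamma f = \gamma g$, that is $f \sim_\cc{W} g$; hence $R_\ell \subseteq \sim_\cc{W}$, and since $\sim_\cc{W}$ is a congruence while $\sim_\ell$ is the \emph{least} congruence containing $R_\ell$, we conclude $\sim_\ell \subseteq \sim_\cc{W}$.

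For the second inclusion I would first note that, as $\sim_\ell$ is already a congruence, Notation \ref{not:sigmarho} identifies $\sigma(\sim_\ell)$ with the class of arrows whose image under the quotient $\C \mr{} \C/\sim_\ell$ is invertible; in particular $\sigma(\sim_\ell)$ contains all isomorphisms and is closed under composition. Since $(\C,\cc{W})$ is split-generated, every weak equivalence is a composite of weak equivalences that split, so it is enough to show that every \emph{split} weak equivalence lies in $\sigma(\sim_\ell)$. Such an arrow, being a section or a retraction whose partner is again a weak equivalence by the two out of three property, fits into a pair $X \mr{s} Y$, $Y \mr{r} X$ of weak equivalences with $rs = id_X$, and I claim that both $s$ and $r$ lie in $\sigma(\sim_\ell)$, using $r$ and $s$ as each other's candidate inverse. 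Indeed $rs = id_X$ holds strictly, while $sr \sim_\ell id_Y$ because $sr\, R_\ell\, id_Y$ via the weak equivalence $\sigma = r$: one computes $r(sr) = (rs)r = r = r\,id_Y$. This yields $\cc{W} \subseteq \sigma(\sim_\ell)$, and the proof is complete.

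I do not anticipate a genuine obstacle once the abstract correspondence of \S\ref{sub:31} is available: the whole argument rests on the one-line computation $r(sr) = r$ combined with the split-generated hypothesis. The care needed is purely in the bookkeeping --- that the partner $r$ of a splitting $s$ is itself in $\cc{W}$ (two out of three applied to $rs = id_X$), that $\sigma(\sim_\ell)$ really is a subcategory so that treating split weak equivalences suffices, and keeping straight that the dual statement uses $R_r$ and a weak equivalence on the source side. If one prefers not to invoke the constructed localization, the first inclusion can instead be argued through Remark \ref{rem:rhoSigmasinlocalization}, but passing through $\gamma$ is the shortest route.
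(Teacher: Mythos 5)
Your proof is correct and follows essentially the same route as the paper's: both reduce, via Corollary \ref{coro:concreto1} and Remark \ref{rem:unicoAaplicadoaplicado}, to checking $R_\ell \subseteq\ \sim_\cc{W}$ (immediate by applying $\gamma$) and that every weak equivalence is a homotopy equivalence for $\sim_\ell$, the latter by using the split-generated hypothesis to reduce to a split pair $rs = id$ and the one-line computation $r(sr) = r\,id$.
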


\begin{proof}
We show only $\sim_\ell \!\ =  \!\ \sim_\cc{W}$, by the duality explained above. 
By 
Remark \ref{rem:unicoAaplicadoaplicado} and 
Corollary \ref{coro:concreto1} (see also Notation \ref{not:sigmarho}) it suffices to show

\smallskip

\noindent 1. For each weak equivalence $A \mr{f} B$, there exists $B \mr{g} A$ such that $gf \sim_\ell id_X$, $fg \sim_\ell id_Y$.

\smallskip

\noindent 2. If $f \!\ R_\ell \!\ g$, then $f \sim_\cc{W} g$.

Item 2 is immediate, and since $(\C,\cc{W})$ is split generated we may assume that $f$ in item 1 is split, we have thus $B \mr{g} A$, which by axiom 2 out of 3 is also a weak equivalence, such that either $gf = id_X$ or $fg = id_Y$. If $gf = id_X$, since $gfg = g \!\ id_Y$ we have $fg \sim_\ell id_Y$. 
The case $fg = id_Y$ is symmetric. 
\end{proof}

We can construct $\sim_\ell$ and $\sim_r$ as follows (recall \ref{sin:quotientadjunction}). Since $R_\ell$ already satisfies that for any $A' \mr{u} A \mr{f,g} B$ we have that $f \ \! R_\ell \ \! g$ implies $fu \ \! R_\ell \ \! gu$, we can construct its closure by composition $R_\ell^c$ by ``only closing by composition on the right", i.e. as the following precongruence (cf \eqref{eq:Rc}, note that the {\em names} of the arrows are modified in order to simplify the comparison with \cite[I,1,Def. 3]{Quillen}). 

\begin{definition}
$f \!\ R_\ell^c \!\ g$ (resp $f \!\ R_r ^c \!\ g$) if and only if there is a commutative diagram of the form on the left (resp. on the right)
\begin{equation} \label{eq:diagramahomotopic}
\vcenter{\xymatrix@C=3pc@R=3pc{
A \ar@<.5ex>[r]^f \ar@<-.5ex>[r]_g \ar[d]_\alpha \ar@<.5ex>[rd]^{\partial_0} \ar@<-.5ex>[rd]_{\partial_1} & B \\
C & \widetilde{A} \ar[l]_\sigma \ar[u]_h
}}
\quad \quad \quad \quad \quad \quad \quad \quad \quad
\vcenter{\xymatrix@C=3pc@R=3pc{
\widetilde{B} \ar@<.5ex>[rd]^{d_0} \ar@<-.5ex>[rd]_{d_1} & C \ar[l]_s \ar[d]^\beta \\
A \ar[u]^k \ar@<.5ex>[r]^f \ar@<-.5ex>[r]_g & B
}}
\end{equation}
in which $\sigma$ (resp. $s$) is a weak equivalence.
\end{definition}

\begin{remark} \label{rem:cfconQuillen}
Quillen's definition of homotopic arrows in \cite[I,1,Def. 3]{Quillen} consists of demanding in addition to the above that 
$C = A$, $\alpha = id_A$ (resp. $C = B$, $\beta = id_B$). We will see below that for fibrant-cofibrant objects of a model category both notions coincide.
\end{remark}

Note that $R_\ell^c$ is clearly a reflexive and symmetric relation, thus $\sim_\ell$ will be its transitive closure. This is already an explicit description of $\sim_\ell$, and thus of $\sim_\cc{W}$: 
\begin{equation} \label{eq:explicitconstruction}
f \sim_\cc{W} g \hbox{ if and only if we have a finite sequence } f = f_0 \!\ R_\ell^c \!\ f_1 \!\ R_\ell^c \, ... \, R_\ell^c \!\ f_n = g.
\end{equation}
In the model category case, however, more can be done, and Quillen shows that his homotopy relation is transitive when $A$ is cofibrant by constructing a new homotopy whose cylinder object is obtained by {\em gluing} the cylinder objects of two composable homotopies, see \cite[Lemmas 3-4]{Quillen} for details.
Unfortunately, as far as we can tell, there is no reasonable axiom we can impose on $(\C,\cc{W})$ which would allow to mimic this construction. 
However, we could find a condition on the homotopy relation, which is known to hold in the case coming from model categories, that ensures that $R_\ell^c$ is already transitive and thus it is already the congruence $\sim_\ell$. We show this in the following subsection.

\subsection{The ``common fork" condition}
\label{sub:fork}

We consider the structures which take the place of cylinder and path objects (\cite[I,1,Def.4]{Quillen}) for categories with weak equivalences.  

\begin{definition}
Let $A \in \C$. A left fork of weak equivalences with vertex $A$ is a commutative diagram of the form $\vcenter{\xymatrix{A \ar[d]_\alpha \ar@<.5ex>[rd]^{\partial_0} \ar@<-.5ex>[rd]_{\partial_1} \\
C & \widetilde{A} \ar[l]^\sigma}}$, in which $\sigma$ and $\alpha$ are weak equivalences. Let $B \in \C$. A right fork of weak equivalences with vertex $B$ is a commutative diagram of the form $\vcenter{\xymatrix{\widetilde{B} \ar@<.5ex>[rd]^{d_0} \ar@<-.5ex>[rd]_{d_1} & C \ar[l]_s \ar[d]^\beta \\
& B}}$, in which $\sigma$ and $\beta$ are weak equivalences. Given arrows $A \mr{f,g} B$, a left (resp. right) homotopy between $f$ and $g$ is a diagram as in \eqref{eq:diagramahomotopic} with a left (resp. right) fork of weak equivalences. 
\end{definition}

\begin{remark} \label{rem:commoncylinderenmodel}
Note that the unique difference between the statement ``$f \!\ R_\ell^c \!\ g$" and the statement ``there is a left homotopy between $f$ and $g$" is that the latter requires that $\alpha$ must be a weak equivalence.

Quillen's definition in \cite[I,1,Def. 4]{Quillen} consists of demanding in addition to the above that 
$C = A$, $\alpha = id_A$, and $\partial_0 + \partial_1$ is a cofibration (resp. $C = B$, $\beta = id_B$ and $(d_0,d_1)$ is a fibration). 
Using axiom {\bf M2}, he shows then (\cite[I,1,Lemma 1]{Quillen}) that left homotopic arrows admit left homotopies. The left homotopies are the ones that yield the transitivity of the homotopic relation by the procedure described in Remark \ref{rem:cfconQuillen}.

But note that the following statement can also be shown for fibrant-cofibrant objects in a model category (see \cite[II, Cor. 1.9]{GJ} for a proof): for any cylinder object, 
left homotopic arrows admit left homotopies with respect to that cylinder object. It is this condition (actually, a weaker one in which we ask for a common fork only for each two pairs of homotopic arrows) which will allow us to prove the transitivity of the relation $R_\ell^c$ in our case. We note that this procedure (together with Proposition \ref{prop:relationscoincide}) also yields a different proof of the transitivity of the homotopy relation in the model category case.
\end{remark}

\begin{definition}
We say that $(\C,\cc{W})$ satisfies the ``common fork" condition if, given arrows $A \xr{f,g,f',g'} B$ such that $f \!\ R_\ell^c \!\ g$, $f' \!\ R_\ell^c \!\ g'$ (resp. $f \!\ R_r^c \!\ g$, $f' \!\ R_r^c \!\ g'$), 
they admit two homotopies with respect to a common fork of weak equivalences. More explicitly, there exist two commutative diagrams as on the left (resp. on the right) below
\begin{equation} \label{eq:diagramahomotopiccommonfork}
\vcenter{\xymatrix@C=3pc@R=3pc{
A \ar@<.5ex>[r]^f \ar@<-.5ex>[r]_g \ar[d]_\alpha \ar@<.5ex>[rd]^{\partial_0} \ar@<-.5ex>[rd]_{\partial_1} & B \\
C & \widetilde{A} \ar[l]_\sigma \ar[u]_h
}}
\quad \quad 
\vcenter{\xymatrix@C=3pc@R=3pc{
A \ar@<.5ex>[r]^{f'} \ar@<-.5ex>[r]_{g'} \ar[d]_\alpha \ar@<.5ex>[rd]^{\partial_0} \ar@<-.5ex>[rd]_{\partial_1} & B \\
C & \widetilde{A} \ar[l]_\sigma \ar[u]_{h'}
}}
\quad \quad \quad \quad
\vcenter{\xymatrix@C=3pc@R=3pc{
\widetilde{B} \ar@<.5ex>[rd]^{d_0} \ar@<-.5ex>[rd]_{d_1} & C \ar[l]_s \ar[d]^\beta \\
A \ar[u]^k \ar@<.5ex>[r]^f \ar@<-.5ex>[r]_g & B
}}
\quad \quad
\vcenter{\xymatrix@C=3pc@R=3pc{
\widetilde{B} \ar@<.5ex>[rd]^{d_0} \ar@<-.5ex>[rd]_{d_1} & C \ar[l]_s \ar[d]^\beta \\
A \ar[u]^{k'} \ar@<.5ex>[r]^{f'} \ar@<-.5ex>[r]_{g'} & B
}}
\end{equation}
in which $\sigma$ and $\alpha$ (resp. $s$ and $\beta$) are weak equivalences.
\end{definition}

\begin{proposition}
If $(\C,\cc{W})$ satisfies the ``common fork" condition then the relation $R^c_\ell$ is transitive.
\end{proposition}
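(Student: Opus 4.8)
The plan is to use the common fork condition to produce, from the two given instances of $R_\ell^c$, a pair of honest homotopies sharing one left fork of weak equivalences, and then to concatenate them. Concretely, suppose $A\mr{f,g,\ell}B$ satisfy $f\,R_\ell^c\,g$ and $g\,R_\ell^c\,\ell$; I must exhibit a commutative diagram as on the left in \eqref{eq:diagramahomotopic} witnessing $f\,R_\ell^c\,\ell$. First I would apply the common fork condition to the two pairs $(f,g)$ and $(g,\ell)$. This gives one left fork $A\mr{\alpha}C\ml{\sigma}\widetilde{A}$ with $\partial_0,\partial_1\colon A\to\widetilde{A}$, $\sigma\partial_0=\alpha=\sigma\partial_1$ and $\sigma,\alpha$ weak equivalences, together with two maps $h,h'\colon\widetilde{A}\to B$ satisfying $h\partial_0=f$, $h\partial_1=g$, $h'\partial_0=g$, $h'\partial_1=\ell$; by the two out of three property applied to $\sigma\partial_0=\alpha=\sigma\partial_1$, the maps $\partial_0$ and $\partial_1$ are themselves weak equivalences.

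The core of the argument is to glue $h$ and $h'$ into a single homotopy from $f$ to $\ell$. Since $h\partial_1=g=h'\partial_0$, the two maps agree on the copy of $A$ along which one wishes to identify, so the natural candidate for a concatenated cylinder is the object $\widetilde{A}''$ obtained by gluing $\widetilde{A}$ to $\widetilde{A}$ along $\partial_1$ and $\partial_0$: it comes with a map $h''\colon\widetilde{A}''\to B$ restricting to $h$ and to $h'$ on the two copies, a map $\sigma''\colon\widetilde{A}''\to C$ restricting to $\sigma$ on each copy (compatible because $\sigma\partial_0=\sigma\partial_1$), and maps $\partial_0'',\partial_1''\colon A\to\widetilde{A}''$ given by $\partial_0$ into the first copy and $\partial_1$ into the second. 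For such data one has $\sigma''\partial_0''=\sigma''\partial_1''$, $h''\partial_0''=f$ and $h''\partial_1''=\ell$ by construction, so the remaining point is that $\sigma''$ is a weak equivalence; this would follow from the two out of three property as soon as the two structure maps $\widetilde{A}\to\widetilde{A}''$ are known to be weak equivalences, since each of them composed with $\sigma''$ equals $\sigma$.

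The step I expect to be the main obstacle is precisely this gluing, because we work in a bare category with weak equivalences: neither the object $\widetilde{A}''$ (a pushout of $\widetilde{A}\leftarrow A\rightarrow\widetilde{A}$) nor the principle that a pushout of a weak equivalence along a weak equivalence is again a weak equivalence is at our disposal. Closing this gap is exactly the purpose of the common fork condition: bringing both homotopies onto one and the same fork is what should allow the concatenation to be carried out within the data already produced rather than as a genuine colimit, and it is here that the weak-equivalence hypotheses on $\alpha$ and $\sigma$, together with the two out of three property, have to carry the argument; one natural line of attack is to iterate the common fork condition, or to use that $\C$ is split-generated, so as to place the two homotopies on a fork carrying enough internal structure to concatenate them without building a new object. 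Once $R_\ell^c$ is shown to be transitive, the dual statement for $R_r^c$ follows by the dual argument.
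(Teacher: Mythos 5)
There is a genuine gap: your argument reaches the point where the two homotopies $h,h'$ must be concatenated, correctly identifies that this requires a pushout $\widetilde{A}''=\widetilde{A}\cup_A\widetilde{A}$ together with the stability of weak equivalences under cobase change --- neither of which is available in a bare category with weak equivalences --- and then stops. The suggested remedies (iterating the common fork condition, or invoking split-generation) are not carried out, and it is not clear how either would produce the missing colimit; in fact the proposition as stated does not even assume split-generation. So as written this is a proof strategy with its central step left open, not a proof.

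The resolution in the paper avoids gluing altogether, and the key is a different choice of how to apply the hypothesis. Given $f_1\ R_\ell^c\ f_2\ R_\ell^c\ f_3$, use symmetry of $R_\ell^c$ to apply the common fork condition to the pairs $(f_1,f_2)$ and $(f_3,f_2)$ --- both ending at the \emph{middle} map --- rather than to $(f_1,f_2)$ and $(f_2,f_3)$ as you do. One then gets a single left fork $A\xr{\alpha}C\xleftarrow{\;\sigma\;}\widetilde{A}$ with $\partial_0,\partial_1\colon A\to\widetilde{A}$ and maps $h,h'\colon\widetilde{A}\to B$ satisfying $h\partial_0=f_1$, $h'\partial_0=f_3$ and $h\partial_1=h'\partial_1=f_2$. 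This data, read sideways, is exactly a right homotopy diagram: taking $s=\partial_1$ (a weak equivalence by two out of three, since $\sigma\partial_1=\alpha$), $\beta=f_2$, $k=\partial_0$ and $d_0=h$, $d_1=h'$ in the right-hand diagram of \eqref{eq:diagramahomotopic} exhibits $f_1\ R_r^c\ f_3$ directly, with no new object constructed. A dual application of the (right) common fork condition to the pairs $(f_1,f_3)$ and $(f_3,f_3)$ then converts this back into $f_1\ R_\ell^c\ f_3$. The lesson is that the common fork condition is not a substitute for gluing cylinders; it is what makes the left/right interchange possible, and transitivity of $R_\ell^c$ is obtained by passing through $R_r^c$ and back.
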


\begin{proof}
Let $f_1 \!\ R_\ell^c \!\ f_2 \!\ R_\ell^c \!\ f_3$. We apply the ``common fork" condition with the hypothesis $f_1 \!\ R_\ell^c \!\ f_2$, $f_3 \!\ R_\ell^c \!\ f_2$. We have thus the two diagrams in the left below, from which we construct the diagram on the right
$$
\vcenter{\xymatrix@C=3pc@R=3pc{
A \ar@<.5ex>[r]^{f_1} \ar@<-.5ex>[r]_{f_2} \ar[d]_\alpha \ar@<.5ex>[rd]^{\partial_0} \ar@<-.5ex>[rd]_{\partial_1} & B \\
C & \widetilde{A} \ar[l]_\sigma \ar[u]_h
}}
\quad \quad 
\vcenter{\xymatrix@C=3pc@R=3pc{
A \ar@<.5ex>[r]^{f_3} \ar@<-.5ex>[r]_{f_2} \ar[d]_\alpha \ar@<.5ex>[rd]^{\partial_0} \ar@<-.5ex>[rd]_{\partial_1} & B \\
C & \widetilde{A} \ar[l]_\sigma \ar[u]_{h'}
}}
\quad \quad \leadsto \quad \quad
\vcenter{\xymatrix@C=3pc@R=3pc{
\widetilde{A} \ar@<.5ex>[rd]^{h} \ar@<-.5ex>[rd]_{h'} & A \ar[l]_{\partial_1} \ar[d]^{f_2} \\
A \ar[u]^{\partial_0} \ar@<.5ex>[r]^{f_1} \ar@<-.5ex>[r]_{f_3} & B
}}
$$
The diagram on the right expresses, by definition, the fact $f_1 \!\ R_r^c \!\ f_3$ (note that $\partial_1$ is a weak equivalence since $\alpha$ and $\sigma$ are so). Now, an argument dual to the above, applied to the hypothesis $f_1 \!\ R_r^c \!\ f_3$, $f_3 \!\ R_r^c \!\ f_3$, yields $f_1 \!\ R_\ell^c \!\ f_3$ as desired.
\end{proof}

\begin{corollary} \label{coro:commonfork}
For any split-generated category with weak equivalences $(\C,\cc{W})$ which satisfies the ``common fork" condition, the homotopy relation $\sim_\cc{W}$ coincides with the relations $R^c_\ell$ and $R^c_r$.
\end{corollary}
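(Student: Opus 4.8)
The plan is to combine three facts already available. By Proposition~\ref{prop:3congruencescoincide} we have $\sim_\cc{W}\ =\ \sim_\ell\ =\ \sim_r$, so it suffices to prove $\sim_\ell\ =\ R^c_\ell$ (the identity $\sim_r\ =\ R^c_r$ then follows by the ``left''/``right'' duality noted right after the definitions of $R_\ell$ and $R_r$, applying the argument in the opposite category with weak equivalences).

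Next I would recall, from the discussion immediately preceding \eqref{eq:explicitconstruction}, that $R^c_\ell$ is the closure of $R_\ell$ by composition, that it is reflexive and symmetric, and hence that $\sim_\ell$ — defined as the least congruence containing $R_\ell$ — is precisely the transitive closure of $R^c_\ell$ (here one uses the routine observation, already invoked for \eqref{eq:explicitconstruction}, that the transitive closure of a reflexive, symmetric relation closed under composition is again closed under composition, hence a congruence). Consequently $\sim_\ell\ =\ R^c_\ell$ holds exactly when $R^c_\ell$ is itself transitive.

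Finally, the Proposition immediately preceding this corollary gives exactly that: since $(\C,\cc{W})$ satisfies the ``common fork'' condition, $R^c_\ell$ is transitive. Hence $\sim_\ell\ =\ R^c_\ell$, dually $\sim_r\ =\ R^c_r$, and together with Proposition~\ref{prop:3congruencescoincide} we conclude $\sim_\cc{W}\ =\ R^c_\ell\ =\ R^c_r$. I do not expect any genuine obstacle here: the statement is a bookkeeping assembly of Proposition~\ref{prop:3congruencescoincide}, the description of $\sim_\ell$ as the transitive closure of $R^c_\ell$, and the transitivity proposition just proved; the only point needing a moment's attention is the elementary fact about transitive closures mentioned above, which has already been used in the paper and can simply be cited.
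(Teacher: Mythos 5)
Your proposal is correct and follows exactly the route the paper intends (and leaves implicit): Proposition~\ref{prop:3congruencescoincide} gives $\sim_\cc{W}\ =\ \sim_\ell\ =\ \sim_r$, the discussion before \eqref{eq:explicitconstruction} identifies $\sim_\ell$ with the transitive closure of the reflexive, symmetric, composition-closed relation $R^c_\ell$, and the preceding proposition supplies the transitivity of $R^c_\ell$ under the ``common fork'' condition, with the right-handed statement obtained by duality. No gaps.
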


\begin{remark}
Let us say that $(\C,\cc{W})$ satisfies the ``fork" condition, which is weaker than the ``common fork" condition, if given arrows $A \xr{f,g} B$ such that $f \!\ R_\ell^c \!\ g$, (resp. $f \!\ R_r^c \!\ g$), 
they admit a homotopy with respect to a fork of weak equivalences, that is a diagram as in \eqref{eq:diagramahomotopic} in which $\alpha$ (resp. $\beta$) is an equivalence. In this case, we have by the two out of three condition that if $f \!\ R_\ell^c \!\ g$ (or if $f \!\ R_r^c \!\ g$) then $f$ is a weak equivalence if and only if $g$ is so, then by \eqref{eq:explicitconstruction} this is also the case if $f \sim_{\ell} g$ (or if $f \sim_{r} g$).  
\end{remark}

\begin{definition} \label{def:saturated}
We say that $(\C,\cc{W})$ is {\em saturated} if for each arrow $f$ of $\C$ we have the implication: if $\gamma f$ is an isomorphism then $f$ is a weak equivalence. 
\end{definition}

\begin{proposition} \label{prop:forkdasaturated}
If $(\C,\cc{W})$ is a split-generated homotopical category (see Definition \ref{def:catwithweyhomotcat}) which satisfies the ``fork" condition then it is saturated.
\end{proposition}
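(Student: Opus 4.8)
The plan is to reduce the hypothesis ``$\gamma f$ is an isomorphism'' to a statement about $\sim_\cc{W}$ alone, and then to feed that statement into the two-out-of-three and weak invertibility axioms. First I would record that, being split-generated, $(\C,\cc{W})$ is Whitehead by Proposition~\ref{prop:splitiswhitehad}, so by Remark~\ref{rem:unicoAaplicado} the congruence $\sim_\cc{W}=\rho\cc{W}$ satisfies condition~1 of Corollary~\ref{coro:concreto2}: the quotient functor $Q_{\sim_\cc{W}}$ and the localization $\gamma=P_\cc{W}$ are isomorphic as objects of $\C\downarrow\mathbb{C}at$. Since an isomorphism there is an isomorphism of categories compatible with the functors out of $\C$, it preserves and reflects invertibility of arrows; hence $\gamma f$ is an isomorphism if and only if $Q_{\sim_\cc{W}}f$ is, which by Notation~\ref{not:sigmarho}.1 and Remark~\ref{rem:whitehead} is exactly the assertion that $f$ is a homotopical equivalence: there exists $g$ with $gf\sim_\cc{W} id$ and $fg\sim_\cc{W} id$.

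Assuming then that $\gamma f$ is an isomorphism, I would fix such a $g$. Split-generation gives $\sim_\cc{W}\;=\;\sim_\ell$ by Proposition~\ref{prop:3congruencescoincide}, so $gf\sim_\ell id$ and $fg\sim_\ell id$. The remark preceding Definition~\ref{def:saturated} records that under the ``fork'' condition $\sim_\ell$-related arrows are weak equivalences simultaneously: one $R^c_\ell$-step is handled by a homotopy through a fork of weak equivalences together with the two-out-of-three axiom, and this is propagated along the transitive closure via~\eqref{eq:explicitconstruction}. Since identities are weak equivalences by axiom~i), we conclude $gf\in\cc{W}$ and $fg\in\cc{W}$. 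Finally, because $(\C,\cc{W})$ is a homotopical category, applying the weak invertibility property (axiom~iii of Definition~\ref{def:catwithweyhomotcat}) with both auxiliary arrows equal to $g$ yields $f\in\cc{W}$, as desired.

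I do not anticipate a real obstacle, as every step is a direct appeal to an already established result; the only point requiring care is that a \emph{single} arrow $g$ must simultaneously witness $gf\sim_\cc{W} id$ and $fg\sim_\cc{W} id$ — but this is precisely how the family $\sigma\rho\cc{W}$ of homotopical equivalences is described in Notation~\ref{not:sigmarho}.1, which is exactly what makes the weak invertibility axiom applicable in the last step.
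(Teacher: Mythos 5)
Your proof is correct and follows essentially the same route as the paper: reduce to a homotopical inverse $g$, use Proposition~\ref{prop:3congruencescoincide} and the remark on the ``fork'' condition to see that $gf$ and $fg$ are weak equivalences, and conclude by weak invertibility. The only difference is that you spell out explicitly (via the Whitehead condition and Corollary~\ref{coro:concreto2}) why ``$\gamma f$ is an isomorphism'' yields a single arrow $g$ witnessing both homotopies, a step the paper's proof leaves implicit by starting directly from ``let $f$ admit a homotopical inverse.''
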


\begin{proof}
Let $X \mr{f} Y$ admit a homotopical inverse $Y \mr{g} X$, then by Proposition \ref{prop:3congruencescoincide} we have 
$gf \sim_\ell id_X$ 
$fg \sim_\ell id_Y$, thus by the previous remark $gf$ and $fg$ are weak equivalences and finally so is $f$ by the weak invertibility property (Definition \ref{def:catwithweyhomotcat}, item iii).
\end{proof}

\begin{corollary} \label{coro:Cfcsaturated}
If $\C$ is a model category, then $(\C_{fc},\cc{W})$ is saturated. \qed
\end{corollary}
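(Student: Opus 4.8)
The plan is to deduce the statement directly from Proposition \ref{prop:forkdasaturated}: its hypotheses on $(\C_{fc},\cc{W})$ are that this category with weak equivalences be split-generated, that it be a homotopical category (i.e. that $\cc{W}$ satisfy the weak invertibility property; see Definition \ref{def:catwithweyhomotcat}), and that it satisfy the ``fork'' condition. So the proof amounts to checking these three hypotheses and then invoking the proposition.

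The first hypothesis is precisely the observation made in Example \ref{ex:retrsect}: using {\bf M1} and {\bf M2}, every weak equivalence between fibrant-cofibrant objects factors as a section followed by a retraction, both weak equivalences, so $(\C_{fc},\cc{W})$ is split-generated (this was already used in Corollary \ref{modeliswhitehead}). For the second hypothesis I would invoke the standard fact that the class of weak equivalences of a model category satisfies the 2-out-of-6 property; this at once gives weak invertibility, since from $fg, hf \in \cc{W}$ one applies 2-out-of-6 to the composable string $\bullet \xr{g} A \xr{f} B \xr{h} \bullet$, whose two consecutive composites are $fg$ and $hf$, to conclude $f \in \cc{W}$. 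Both 2-out-of-3 and weak invertibility are inherited by the full subcategory $\C_{fc}$ with the restricted weak equivalences, since a witness computed in $\C$ stays in $\C_{fc}$ because the latter is full; hence $(\C_{fc},\cc{W})$ is a homotopical category. For the third hypothesis I would appeal to \S \ref{sub:modelcat}, where the relation $R_\ell^c$ on $\C_{fc}$ is identified with Quillen's left homotopy relation: for fibrant-cofibrant objects, any two left homotopic maps admit a left homotopy through a \emph{good} cylinder object $A \amalg A \to \mathrm{Cyl}(A) \xr{\sigma} A$ with $\sigma \in \cc{W}$, which in the terminology of \S \ref{sub:fork} is exactly a left fork of weak equivalences with vertex $A$ and $\alpha = \mathrm{id}_A$. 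One can cite \cite[II, Cor.~1.9]{GJ} for this; it yields in fact the stronger ``common fork'' condition, and the right-hand situation is dual. Thus $(\C_{fc},\cc{W})$ satisfies the ``fork'' condition, and Proposition \ref{prop:forkdasaturated} gives its saturation.

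The bookkeeping above is routine; what deserves care, and what I would single out as the main obstacle, is keeping the argument free of circularity. One must not establish ``homotopical'' by arguing that $\gamma f$ being invertible forces $f \in \cc{W}$, for that is exactly the saturation statement being proved; the legitimate input is the 2-out-of-6 property, which holds for model categories by independent and elementary means. Likewise, the validity of the ``fork'' condition in $\C_{fc}$ rests on identifying the abstract relation $R_\ell^c$ of \S \ref{sub:construction} with Quillen's notion of left homotopy, which is the substantive content of \S \ref{sub:modelcat}; once that is granted, the present corollary is a one-line application of Proposition \ref{prop:forkdasaturated}, and I expect that identification --- rather than anything in the argument above --- to be where the real work lies.
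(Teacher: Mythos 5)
Your proof is correct and is exactly the paper's intended argument: the corollary carries no written proof precisely because it is the instance of Proposition \ref{prop:forkdasaturated} for $(\C_{fc},\cc{W})$, whose three hypotheses are supplied by Example \ref{ex:retrsect} (split-generated), the two-out-of-six property of the weak equivalences of a model category (homotopical), and Remark \ref{rem:commoncylinderenmodel} together with \cite[II, Cor. 1.9]{GJ} (the ``fork'', indeed ``common fork'', condition). Your attention to non-circularity in establishing weak invertibility is well placed; note only that the paper, too, takes this property of model categories as externally known rather than proving it, so on that point you are exactly as (in)complete as the source.
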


\subsection{The equivalence between the two notions of homotopy for a model category} \label{sub:modelcat}

Let $\C$ be a model category, we consider in $\C_{fc}$ the relation $\sim_\ell$, recall that it  equals $R_\ell^c$. We will show that it coincides with Quillen's notion of left homotopic arrows.

\begin{lemma} \label{lema:sigmafibration}
For arrows
$A \mr{f,g} B$ of $\C_{fc}$ such that $f \!\ R_\ell^c \!\ g$, the diagram in \eqref{eq:diagramahomotopic} can be taken with $\sigma$ a fibration.
\end{lemma}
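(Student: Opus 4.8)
The statement says: given $A \xrightarrow{f,g} B$ in $\C_{fc}$ with $f \mathrel{R_\ell^c} g$, we can find a left homotopy diagram as in \eqref{eq:diagramahomotopic} in which $\sigma$ is a fibration. The idea is to start from an arbitrary such diagram — a commutative square with objects $A$, $B$, $C$, $\widetilde{A}$, maps $\alpha\colon A\to C$, $\partial_0,\partial_1\colon A\to\widetilde A$, $\sigma\colon \widetilde A\to C$ a weak equivalence, and $h\colon \widetilde A\to B$ with $h\partial_0=f$, $h\partial_1=g$, $\sigma\partial_i=\alpha$ — and to improve it by factoring $\sigma$ using axiom {\bf M2}. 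Concretely, I would factor $\sigma\colon \widetilde A\to C$ as $\widetilde A \xrightarrow{j} \widetilde A' \xrightarrow{\sigma'} C$ with $j$ a trivial cofibration and $\sigma'$ a fibration; since $\sigma$ is a weak equivalence, $\sigma'$ is a trivial fibration by two out of three. Then I replace $\widetilde A$ by $\widetilde A'$, replace $\partial_i$ by $j\partial_i$, replace $h$ by a new map $h'\colon \widetilde A'\to B$, and keep $\sigma'$ (now a fibration) in the role of $\sigma$.

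The one thing that does not come for free is the map $h'\colon \widetilde A'\to B$ with $h'(j\partial_i)=f,g$: we need to extend $h\colon \widetilde A\to B$ along the trivial cofibration $j\colon \widetilde A\to\widetilde A'$. This is where axiom {\bf M1} enters. Since $B$ is fibrant (here is where the hypothesis $B\in\C_{fc}$ is used), the map $B\to 1$ is a fibration, so in the lifting square with $j$ on the left (a trivial cofibration) and $B\to 1$ on the right, with top edge $h\colon\widetilde A\to B$ and bottom edge the unique map $\widetilde A'\to 1$, the diagonal filler $h'\colon\widetilde A'\to B$ exists and satisfies $h' j = h$. Hence $h'(j\partial_0) = h\partial_0 = f$ and similarly $h'(j\partial_1)=g$. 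Also $\sigma'(j\partial_i) = \sigma\partial_i = \alpha$, so the new square commutes with the same $\alpha$. Thus $(\alpha, j\partial_0, j\partial_1, \sigma', h')$ exhibits $f\mathrel{R_\ell^c} g$ with $\sigma'$ a fibration, as desired.

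The main (and essentially only) obstacle is making sure the lifting hypotheses are exactly met — i.e. that $j$ is genuinely a trivial cofibration (so that it lifts against all fibrations, in particular $B\to 1$), which is precisely what the "trivial cofibration / fibration" half of the factorization axiom {\bf M2} guarantees, and that $B$ fibrant makes $B\to 1$ a fibration. No cofibrancy of $A$ is needed for this lemma, nor is transitivity of $R_\ell^c$ invoked; the argument is purely a one-step factor-and-lift. I would also remark, in passing, that this is the first place in \S\ref{sub:modelcat} where the fibrant hypothesis on the codomain is genuinely used, and that the dual statement (for $R_r^c$, with $s$ taken to be a cofibration, using that $A$ is cofibrant) holds by the dual argument.
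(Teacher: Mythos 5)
Your proof is correct and follows essentially the same route as the paper: factor $\sigma$ by \textbf{M2} as a trivial cofibration followed by a fibration, and then use a single instance of \textbf{M1} to carry $h$ over to the new object. The only (immaterial) difference is that the paper lifts against $\widetilde{A}\to 1$ (using that $\widetilde{A}$ is fibrant, so the trivial cofibration is a section with retraction $r$, and takes $hr$ as the new $h$), whereas you lift $h$ directly against $B\to 1$ using that $B$ is fibrant.
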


\begin{proof}
Since $f \!\ R_\ell^c \!\ g$, we have a commutative diagram $\vcenter{\xymatrix@C=3pc@R=3pc{
A \ar@<.5ex>[r]^f \ar@<-.5ex>[r]_g \ar[d]_\alpha \ar@<.5ex>[rd]^{\partial_0} \ar@<-.5ex>[rd]_{\partial_1} & B \\
C & \widetilde{A} \ar[l]_\sigma \ar[u]_h
}}$ in which all the objects are fibrant-cofibrant. We use axiom {\bf M2} and factorize $\sigma$ as $\widetilde{A} \mr{i} D \mr{p} C$. Note that $D$ is also a fibrant-cofibrant object. Since $i$ is a section (see Example \ref{ex:retrsect}), let $r$ be its retraction. We have thus the commutative diagram 
$\vcenter{\xymatrix@C=3pc@R=1pc{
A \ar@<.5ex>[r]^f \ar@<-.5ex>[r]_g \ar[dd]_\alpha \ar@<.5ex>[rdd]^{i \partial_0} \ar@<-.5ex>[rdd]_{i \partial_1} & B \\
& \widetilde{A} \ar[u]_h \\
C & D \ar[l]_p \ar[u]_r 
}}$
in which $p$ is a fibration as desired.
\end{proof}

\begin{remark} \label{rem:lefthomotquillenobjfibcofib}
Let us denote by $\stackrel{\ell}{\sim}$ the relation of left homotopy as in \cite[I,1,Defs. 3,4]{Quillen}: $f \stackrel{\ell}{\sim} g$ if and only if there is a commutative diagram 
$\vcenter{\xymatrix@C=3pc@R=3pc{
A \ar@<.5ex>[r]^f \ar@<-.5ex>[r]_g \ar[d]_{id} \ar@<.5ex>[rd]^{\partial_0} \ar@<-.5ex>[rd]_{\partial_1} & B \\
A & A \times I \ar[l]_\sigma \ar[u]_h
}}$, in which $\sigma$ is a weak equivalence and $A \amalg A \xr{\partial_0 + \partial_1} A \times I$ is a cofibration. 
Note that, by \cite[II, Cor. 1.9]{GJ}, we may assume $\sigma$ to be a fibration, and thus 
$A \times I$ is a fibrant object if $A$ is so.
Also note that, by \cite[I,1,Lemma 2]{Quillen}, $A \times I$ is a cofibrant object if $A$ is so.
\end{remark}

\begin{proposition} \label{prop:relationscoincide}
In $\C_{fc}$, the relations $\stackrel{\ell}{\sim}$ and $\sim_\ell$ coincide.
\end{proposition}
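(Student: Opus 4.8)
I would prove the two inclusions of relations separately; one is essentially a tautology and the other is a single lifting argument. For the easy inclusion $\stackrel{\ell}{\sim}\ \subseteq\ \sim_\ell$, observe that the diagram defining $f \stackrel{\ell}{\sim} g$ in Remark \ref{rem:lefthomotquillenobjfibcofib} is the special case of the diagram \eqref{eq:diagramahomotopic} defining $R_\ell^c$ obtained by taking $C = A$, $\alpha = id_A$, $\widetilde{A} = A\times I$, and forgetting the extra requirement that $\partial_0 + \partial_1$ be a cofibration (the map $\sigma$ is a weak equivalence in both cases). Hence $f \stackrel{\ell}{\sim} g$ implies $f \!\ R_\ell^c \!\ g$, and since in $\C_{fc}$ we have $\sim_\ell\ =\ R_\ell^c$ (recalled at the beginning of this subsection; it also follows from Corollary \ref{coro:commonfork}), this gives $f \sim_\ell g$.

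For the converse $\sim_\ell\ \subseteq\ \stackrel{\ell}{\sim}$, take $f,g : A \to B$ in $\C_{fc}$ with $f \!\ R_\ell^c \!\ g$, witnessed by a diagram as in \eqref{eq:diagramahomotopic}. By Lemma \ref{lema:sigmafibration} I may assume $\sigma : \widetilde{A} \to C$ is a fibration, and being also a weak equivalence it is a trivial fibration. Using axiom {\bf M2}, factor the codiagonal $A \amalg A \xr{\nabla} A$ as $A \amalg A \xr{j} A' \xr{\pi} A$ with $j$ a cofibration and $\pi$ a trivial fibration; writing $j_0, j_1 : A \to A'$ for the two legs of $j$, the object $A'$ together with $j_0, j_1$ and $\pi$ is a cylinder object for $A$ in Quillen's sense, since $\pi j_0 = \pi j_1 = id_A$ and $A'$ is cofibrant (as $A$, hence $A \amalg A$, is, and $j$ is a cofibration).

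It remains to produce the homotopy $A' \to B$. Consider the square with $j : A \amalg A \to A'$ on the left, $\sigma : \widetilde{A} \to C$ on the right, top edge $(\partial_0,\partial_1) : A \amalg A \to \widetilde{A}$, and bottom edge $\alpha\pi : A' \to C$: it commutes because both composites equal $\alpha\nabla$ (using $\sigma\partial_0 = \sigma\partial_1 = \alpha$ and $\pi j = \nabla$). Since $j$ is a cofibration and $\sigma$ a trivial fibration, axiom {\bf M1} yields a lift $\phi : A' \to \widetilde{A}$ with $\phi j = (\partial_0,\partial_1)$. Then $H := h\phi : A' \to B$ satisfies $H j_0 = h\partial_0 = f$ and $H j_1 = h\partial_1 = g$, so $f \stackrel{\ell}{\sim} g$ (fibrancy of the cylinder is not demanded in the definition of $\stackrel{\ell}{\sim}$, but could be arranged as in Remark \ref{rem:lefthomotquillenobjfibcofib} if one wished).

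The argument has no serious obstacle; the only points requiring care are the appeal to Lemma \ref{lema:sigmafibration}, which is precisely what makes $\sigma$ a fibration so that {\bf M1} applies, and the observation that factoring the codiagonal through a cofibration followed by a trivial fibration produces a genuine Quillen cylinder object. Everything else is diagram chasing. (As usual the dual statement for $\stackrel{r}{\sim}$ and $\sim_r$ holds by the dual argument.)
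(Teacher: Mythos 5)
Your proof is correct and follows essentially the same route as the paper: the easy inclusion is the observation that Quillen's diagram is a special case of the one defining $R_\ell^c$, and the converse uses Lemma \ref{lema:sigmafibration} to make $\sigma$ a trivial fibration, factors the codiagonal via {\bf M2} to get a cylinder object, and lifts against $\sigma$ via {\bf M1} exactly as in the paper (your $j$, $\pi$, $\phi$ are the paper's $\partial_0'+\partial_1'$, $\sigma'$, $k$). The only difference is cosmetic notation and your extra (harmless) remarks about cofibrancy of the cylinder.
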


\begin{proof}
Given arrows $f,g$ such that $f \stackrel{\ell}{\sim} g$, by Remark \ref{rem:lefthomotquillenobjfibcofib} we have $f \!\ R_\ell^c \!\ g$. To show the other implication, let $f \!\ R_\ell^c \!\ g$. We have thus a commutative diagram $\vcenter{\xymatrix@C=3pc@R=3pc{
A \ar@<.5ex>[r]^f \ar@<-.5ex>[r]_g \ar[d]_\alpha \ar@<.5ex>[rd]^{\partial_0} \ar@<-.5ex>[rd]_{\partial_1} & B \\
C & \widetilde{A} \ar[l]_\sigma \ar[u]_h
}}$, in which we may assume $\sigma$ to be a fibration and a weak equivalence by Lemma \ref{lema:sigmafibration}. 
Using axiom {\bf M2} (see Example \ref{ex:retrsect}), we construct a cylinder object for $A$, $A \amalg A \xr{\partial_0' + \partial_1'} A \times I \mr{\sigma'} A$, and using axiom {\bf M1} we have
$\vcenter{\xymatrix{
A \amalg A \ar[r]^-{\partial_0 + \partial_1} \ar[d]_{\partial_0' + \partial_1'} & \widetilde{A} \ar[d]^\sigma \\
A \times I \ar[r]_-{\alpha \sigma'} \ar@{.>}[ru]^{k} & C
}}$. The arrow $A \times I \mr{hk} B$ yields the desired homotopy.
\end{proof}

Combining  
Propositions \ref{prop:3congruencescoincide} 
 and \ref{prop:relationscoincide}, it follows that all the considered notions of homotopy coincide for fibrant-cofibrant objects and thus 
\ref{sin:indep2} follows from Corollary \ref{coro:cfindep2}.

\subsection{Replacement in a category with weak equivalences}
\label{sub:replacement}

We fix a category with weak equivalences $(\C,\cc{W})$ and a subcategory $\C_0$ of $\C$. We denote by $\cc{W}_0$ the restriction of $\cc{W}$ to $\C_0$, we consider the subcategory with weak equivalences $(\C_0,\cc{W}_0)$ and denote by $\C_0 \mr{\gamma_0} \HoCo$ its homotopy category.

\begin{definition} \label{def:pointwisedeformation}
A pointwise left (resp. right) deformation of $\C$ into $\C_0$  
 consists of giving for each object $X$ a weak equivalence $rX \mr{\theta_X} X$ (resp. $X \mr{\theta_X} rX$), with $rX \in \C_0$, and for each arrow $X \mr{f} Y$ a commutative diagram $\vcenter{\xymatrix{rX \ar[d]_{\theta_X} \ar[r]^{rf} & rY \ar[d]^{\theta_Y} \\
X \ar[r]_f & Y}}$ (resp. $\vcenter{\xymatrix{X \ar[d]_{\theta_X} \ar[r]^{f} & Y \ar[d]^{\theta_Y} \\
rX \ar[r]_{rf} & rY}}$). We denote it by $r: \C \curvearrowright \C_0$, omitting to write explicitly the arrows $\theta_X$.

\smallskip

\noindent
Note that, by the two out of three property, $rf$ is a weak equivalence if and only if $f$ is so.
\end{definition}

Recalling Definition \ref{def:deformation}, we note that a deformation of $\C$ into $\C_0$ is a pointwise deformation in which $r$ is a functor. We also denote deformations by $r: \C \curvearrowright \C_0$.

\begin{example}
The fibrant and cofibrant replacements $R$ and $Q$ in \cite[I,1, proof of Th.1]{Quillen} are examples of left and right pointwise deformations. It was observed later (see for example \cite[3.3, v)]{DHKS}) that for all the significant examples it was not a problem to consider functorial replacements.
\end{example}

\begin{definition}
Consider an application $r$ as above which gives, for each arrow $X \mr{f} Y$ of $\C$, another arrow $rX \mr{rf} rY$. Then, for any zigzag $P$ of arrows of $\C$, say \mbox{$X=X_0 \mr{f_0} X_1 \ml{f_1} X_2 \mr{f_2} ... X_n = Y$} 
we define the zigzag $rP$ applying $r$ pointwise: $rX=rX_0 \mr{rf_0} rX_1 \ml{rf_1} rX_2 \mr{rf_2} ... rX_n = rY$. 

\end{definition}

\begin{remark} \label{rem:rPenC0}
In the previous definition, note that:

\smallskip

\noindent -If $rf$ is a weak equivalence when $f$ is so, then when the backwards arrows of $P$ are weak equivalences, so are the ones of $rP$.

\smallskip

\noindent -If in addition all the arrows $rX \mr{rf} rY$ belong to a subcategory $\C_0$, then $rP$ is a zigzag of arrows of $\C_0$.
\end{remark}

\begin{remark} \label{rem:dosclasesdistintas}
Recall the definition of the homotopy relation  between arrows of $\C$, $f \sim_\cc{W} g$ if and only if $\gamma f = \gamma g$, we have also the relation $\sim_{\cc{W}_0}$ of homotopy in $\C_0$,  which relates two arrows $f,g$ if and only if $\gamma_0 f = \gamma_0 g$. 
Either by the definition of the equivalence relations in Definition \ref{def:construccionzigzags}, or by the universal properties involved, it is easy to see that when two arrows (or more generally two zigzags) of $\C_0$ are related by the equivalence relation defining $\HoCo$, then so are they 
by the one defining $\HoC$ when considered as zigzags of arrows of $\C$. We have thus
that $f \sim_{\cc{W}_0} g$ implies $f \sim_\cc{W}  g$, 
but the other implication doesn't necessarily hold.
Note that, in the hypothesis of previous remark, we can consider the class of $rP$ via the relation defining $\HoCo$ or $\HoC$, and this yields two arrows of $\HoC$ which are {\em a priori} different.
\end{remark}

We consider in what follows a left pointwise deformation $r$ of $\C$ into $\C_0$. As usual, there are dual versions for a right pontwise deformation which we omit.

\begin{lemma} \label{lema:zigzagreducidos}
Any zigzag $X \mr{P} Y$ of $\HoC$ is in the same class of the zigzag \mbox{$X \ml{\theta_X} rX \mr{rP} rY \mr{\theta_Y} Y$.}
\end{lemma}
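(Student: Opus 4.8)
The plan is to reduce the general zigzag $X \mr{P} Y$ step by step, using at each step the commutative squares provided by the pointwise deformation $r$ together with the three operations (and their inverses) that define the equivalence relation on zigzags in Definition \ref{def:construccionzigzags}. The key point is that for each arrow $X_i \mr{f_i} X_{i+1}$ appearing in $P$ (in either direction) we have, by Definition \ref{def:pointwisedeformation}, a commutative square relating $f_i$ with $rf_i$ via the weak equivalences $\theta_{X_i}$, $\theta_{X_{i+1}}$; and by the remark in that definition $rf_i$ is a weak equivalence whenever $f_i$ is, so the backwards arrows of $rP$ are again weak equivalences and $X \ml{\theta_X} rX \mr{rP} rY \mr{\theta_Y} Y$ is a legitimate zigzag of $\HoC$ (this is Remark \ref{rem:rPenC0}).

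First I would treat a single arrow. Given $X_0 \mr{f_0} X_1$ (a forward arrow), the square says $\theta_{X_1} \circ rf_0 = f_0 \circ \theta_{X_0}$. So in $\HoC$ the length-one zigzag $X_0 \mr{f_0} X_1$ is equal to $X_0 \ml{\theta_{X_0}} rX_0 \mr{rf_0} rX_1 \mr{\theta_{X_1}} X_1$: indeed the zigzag $X_0 \ml{\theta_{X_0}} rX_0 \mr{rf_0} rX_1 \mr{\theta_{X_1}} X_1$ can be rewritten, composing the last two forward arrows, as $X_0 \ml{\theta_{X_0}} rX_0 \mr{f_0\theta_{X_0}} X_1$, and then inserting $\theta_{X_0}$ with its formal inverse in the middle (operation iii in reverse) and composing on the other side, one recovers $X_0 \mr{f_0} X_1$. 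For a backwards arrow $X_0 \ml{f_0} X_1$ with $f_0$ a weak equivalence one argues symmetrically, using that $rf_0$ is then also a weak equivalence. I would then splice these local rewritings together along $P$: at the junction between the rewritten piece for $f_{i-1}$ and that for $f_i$ one finds $\dots \mr{\theta_{X_i}} X_i \ml{\theta_{X_i}} rX_i \dots$ (or with the roles of the two $\theta_{X_i}$ reversed, depending on orientations), which is cancelled by operation iii, leaving precisely $X \ml{\theta_X} rX \mr{rP} rY \mr{\theta_Y} Y$.

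The main obstacle — really the only thing requiring care — is bookkeeping of the four orientation cases at each vertex $X_i$ (both adjacent arrows forward, both backward, or one of each), and checking that in every case the two copies of $\theta_{X_i}$ introduced at the junction point in opposite directions and are adjacent with nothing between them, so that operation iii applies. This is routine but must be done uniformly; the cleanest way is to do the single-arrow case once as above and then observe that the pasted zigzag, before cancellation, is literally $X \ml{\theta_X} rX \mr{rf_0} rX_1 \mr{\theta_{X_1}} X_1 \cdots$ interleaved with $\cdots X_{i} \xleftrightarrow{\theta_{X_i}} rX_i \cdots$ at each internal vertex, so the cancellations are forced. One should also note the degenerate case where $P$ has length $0$ (i.e. $X = Y$ and $P = id_X$): then $rP = id_{rX}$ and the claimed zigzag is $X \ml{\theta_X} rX \mr{\theta_X} X$, which is identified with $id_X$ by operation iii. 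This completes the argument.
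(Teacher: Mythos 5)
Your proposal is correct and follows essentially the same route as the paper's proof: establish the two length-one cases (a forward arrow, and a backward weak equivalence) from the commutative squares of the pointwise deformation, then splice them along $P$, cancelling the adjacent pairs $\mr{\theta_{X_i}} X_i \ml{\theta_{X_i}}$ by operation iii of Definition \ref{def:construccionzigzags}. The paper leaves the splicing implicit ("the desired result follows immediately"), whereas you spell out the junction bookkeeping; the content is the same.
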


\begin{proof}
From the diagram in Definition \ref{def:pointwisedeformation}, and the definition of the equivalence relation between zigzags in Definition \ref{def:construccionzigzags}, for each arrow $X \mr{f} Y$ it follows:

\smallskip

\noindent -The zigzag $X \mr{f} Y$ is equivalent to $X \ml{\theta_X} rX \mr{rf} rY \mr{\theta_Y} Y$.

\smallskip

\noindent -If $f$ is a weak equivalence, the zigzag $Y \ml{f} X$ is equivalent to $Y \ml{\theta_Y} rY \ml{rf} rX \mr{\theta_X} X$.

\smallskip

\noindent
From these two statements, the desired result follows immediately.
\end{proof}

\begin{definition} \label{def:HoCr}
We define the category $\HoCr$ as follows: its objects are those of $\C$ and an arrow $X \mr{} Y$ is given by the class in $\HoC$ of a zigzag of arrows of $\C_0$ from $rX$ to $rY$ in which the backward arrows are weak equivalences. Arrows are composed in $\HoC$, and the identities are given by those of $\HoC$.

We define a functor $\C \mr{\gamma_r} \HoCr$ which is the identity on objects and maps an arrow $f$ to the class of the zigzag 
of length one $rf$. 
All the verifications are straightforward.
\end{definition}

From Remark \ref{rem:rPenC0} and Lemma \ref{lema:zigzagreducidos} it follows 

\begin{corollary} \label{coro:HoCr}
We have a commutative diagram
$\vcenter{\xymatrix@R=1pc{ & \HoC \ar@<-1ex>[dd]_{\varphi} \\
\cc{C} \ar[ru]^{\gamma} \ar[rd]_{\gamma_r} \\
& \HoCr \ar@<-1ex>[uu]_{\psi}   }}$
in which $\varphi$ and $\psi$ are mutually inverse functors, given by ``conjugation with the arrows $\theta$": more precisely, both functors are the identity on objects and satisfy

\smallskip

\noindent 
$\varphi [X \mr{P} Y] = [rX \mr{\theta_X} X \mr{P} Y \ml{\theta_Y} rY] \quad ( \, = [rX \mr{rP} rY])$,

\smallskip

\noindent 
$\psi [rX \mr{Q} rY] = [X \ml{\theta_X} rX \mr{Q} rY \mr{\theta_Y} Y]$. 

In particular, $\C \mr{\gamma_r} \HoCr$ is the localization of $\C$ with respect to $\cc{W}$.
\qed
\end{corollary}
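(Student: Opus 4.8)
The plan is to build $\psi$ and $\varphi$ by hand from the ``conjugation with the arrows $\theta$'' formulas in the statement, check that they are well-defined, mutually inverse functors compatible with $\gamma$ and $\gamma_r$, and then read off the last assertion formally.

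I would begin with the easy direction $\psi$. An arrow $X \to Y$ of $\HoCr$ is, by definition of that category, a morphism of $\HoC$ from $rX$ to $rY$ (represented by some $\C_0$-zigzag $Q$); so defining $\psi$ by the rule $[Q] \mapsto [\theta_Y]\,[Q]\,[\theta_X]^{-1}$, that is sending such a $Q$ to the class of $X \ml{\theta_X} rX \mr{Q} rY \mr{\theta_Y} Y$, visibly depends only on the morphism of $\HoC$ represented and is a functor: when composing, an inner pair $[\theta_Y]^{-1}[\theta_Y]$ cancels, and identities go to identities. For $\varphi$ I would send the class of a $\C$-zigzag $X \mr{P} Y$ to the class of $rX \mr{\theta_X} X \mr{P} Y \ml{\theta_Y} rY$; this too is a fixed pre/post-concatenation, hence depends only on $[P] \in \HoC$, and functoriality again reduces to cancelling a pair $Y \ml{\theta_Y} rY \mr{\theta_Y} Y$ by operation (iii) of Definition \ref{def:construccionzigzags}, with identities going to identities.

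The one step with real content --- and the one I expect to be the obstacle --- is checking that $\varphi[P]$ is genuinely an arrow of $\HoCr$, i.e.\ that it can be represented by a zigzag \emph{of arrows of $\C_0$}; here one cannot simply ``apply $r$ termwise'' to $P$, since $r$ is only a pointwise deformation and not a functor. The way out is Lemma \ref{lema:zigzagreducidos}, which rewrites $[P]$ as the class of $X \ml{\theta_X} rX \mr{rP} rY \mr{\theta_Y} Y$; substituting this into the definition of $\varphi[P]$ and cancelling the resulting $\theta$-pairs by operation (iii) yields $\varphi[P] = [rP]$, and $rP$ is a $\C_0$-zigzag by Remark \ref{rem:rPenC0}. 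Applying the same identity $\varphi[P] = [rP]$ to the length-one zigzag of an arrow $f$ gives $\varphi\gamma(f) = [rf] = \gamma_r(f)$, so the triangle commutes.

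Finally I would verify $\psi\varphi = id$ and $\varphi\psi = id$ by direct computation: in each case the composite turns the zigzag representing an arrow into that same zigzag flanked on each side by a weak equivalence ($\theta_X$, resp.\ $\theta_Y$) traversed in both directions, and both flanks disappear by operation (iii), leaving the original; $\psi\gamma_r = \gamma$ is then automatic. Since $\varphi$ is consequently an isomorphism of categories under $\C$ carrying $\gamma$ to $\gamma_r$, and $\gamma$ is by construction the localization of $\C$ at $\cc{W}$, the functor $\gamma_r$, being the composite of $\gamma$ with the isomorphism $\varphi$, is a localization of $\C$ at $\cc{W}$ as well.
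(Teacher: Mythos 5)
Your proposal is correct and follows exactly the route the paper intends: the corollary is stated with the proof left as an immediate consequence of Remark \ref{rem:rPenC0} and Lemma \ref{lema:zigzagreducidos}, and you correctly identify that the only step with real content is showing $\varphi[P]$ is represented by a $\C_0$-zigzag, which you resolve via the Lemma (giving $\varphi[P]=[rP]$) and the Remark, with the remaining verifications being routine cancellations of $\theta$-pairs. Nothing is missing.
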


\begin{definition}
We say that $\C$ is (resp. pointwise) deformable into $\C_0$ it there is a finite sequence of (resp. pointwise) deformations \mbox{$\C \stackrel{r_1}{\curvearrowright}
\C_1 \stackrel{r_2}{\curvearrowright}
\C_2 ... \stackrel{r_n}{\curvearrowright}
\C_n = \C_0$.} In this case, for each arrow $X \mr{f} Y$ we denote by $rX \mr{rf} rY$ the arrow $r_n ... r_2 r_1 f$, and for each $X$ we denote by $\theta_X$ the zigzag from $rX$ to $X$ constructed from the $(\theta_i)_X$. 
\end{definition}

\begin{remark} \label{rem:rfwesiifwe}
As in Definition \ref{def:pointwisedeformation}, in the definition above we have by the two out of three property that $rf$ is a weak equivalence if and only if $f$ is so.
\end{remark}

We note that Lemma \ref{lema:zigzagreducidos}, Definition \ref{def:HoCr} and Corollary \ref{coro:HoCr} hold for a $\C$ which is pointwise deformable into $\C_0$ with the exact same formulations.

\begin{lemma} \label{lema:HoCrlengthone}
If $(\C_0,\cc{W})$ is Whitehead, then any arrow 
$X \mr{} Y$
of $\HoCr$ is represented by a zigzag of length one $rX \mr{} rY$.
\end{lemma}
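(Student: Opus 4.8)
The plan is to unwind the definitions and reduce everything to the Whitehead hypothesis on $(\C_0,\cc{W})$ via Corollary \ref{coro:concreto2}. An arrow $X \mr{} Y$ of $\HoCr$ is, by Definition \ref{def:HoCr}, the class in $\HoC$ of a zigzag $Z$ of arrows of $\C_0$ from $rX$ to $rY$ whose backward arrows are weak equivalences. Since all the objects appearing in $Z$ lie in $\C_0$ and the weak equivalences are the restricted ones $\cc{W}_0 = \cc{W}\cap\C_0$, I would first view $Z$ as a zigzag in $(\C_0,\cc{W}_0)$ and take its class in $\HoCo$. Because $(\C_0,\cc{W})$ is Whitehead, condition 2 of Corollary \ref{coro:concreto2} holds for it, which by Remark \ref{rem:condition2} says exactly that every zigzag of arrows of $\C_0$ is in the same class in $\HoCo$ as a zigzag of length one $rX \mr{w} rY$ for some single arrow $w$ of $\C_0$.

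Next I would transport this identification from $\HoCo$ back to $\HoC$. By Remark \ref{rem:dosclasesdistintas}, if two zigzags of $\C_0$ are related by the equivalence relation defining $\HoCo$, then they are related by the one defining $\HoC$; hence the equality of classes $[Z] = [rX \mr{w} rY]$ that we obtained in $\HoCo$ still holds when both sides are regarded as arrows of $\HoC$. But $[Z]$ in $\HoC$ is precisely the arrow $X \mr{} Y$ of $\HoCr$ we started with (again by Definition \ref{def:HoCr}, where composition and the ambient identifications are taken in $\HoC$), and $[rX \mr{w} rY]$ is exactly a length-one representative in $\HoCr$. So the arrow is represented by the single arrow $w$ of $\C_0$, as claimed.

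The only genuinely delicate point is making sure the passage between $\HoCo$ and $\HoC$ is used in the correct direction: the implication in Remark \ref{rem:dosclasesdistintas} goes $\sim_{\cc{W}_0} \Rightarrow \sim_\cc{W}$, which is exactly what is needed here — we produce the simplification inside $\C_0$ (where Whitehead applies) and then push it up. One should also check the trivial compatibility that a length-one zigzag $rX \mr{w} rY$ of $\C_0$ is a legitimate representative of an arrow of $\HoCr$, which is immediate from the definition. No appeal to the structure of $\C$ beyond $\C_0$ being Whitehead is required, and no computation is involved — the proof is a short bookkeeping argument chaining Definition \ref{def:HoCr}, Remark \ref{rem:condition2}, and Remark \ref{rem:dosclasesdistintas}.
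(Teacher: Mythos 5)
Your proof is correct and follows exactly the same route as the paper's: apply condition 2 of Corollary \ref{coro:concreto2} (via Remark \ref{rem:condition2}) inside $(\C_0,\cc{W}_0)$ to replace the zigzag by a length-one zigzag up to the relation defining $\HoCo$, then transport the identification to $\HoC$ using Remark \ref{rem:dosclasesdistintas}. Your version merely spells out the bookkeeping (including the direction of the implication $\sim_{\cc{W}_0}\Rightarrow\ \sim_\cc{W}$) that the paper leaves implicit.
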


\begin{proof}
By item 2 in Corollary \ref{coro:concreto2}, for each zigzag of arrows of $\C_0$ there is a zigzag of length one which is in the same class by the equivalence relation defining $\HoCo$ (see Remark \ref{rem:condition2}). Remark \ref{rem:dosclasesdistintas} finishes the proof.
\end{proof}

Combining Corollary \ref{coro:HoCr} and Lemma \ref{lema:HoCrlengthone}, we have that 
if $\C$ is pointwise deformable into a $\C_0$
which satisfies Whitehead, the functor $\C \mr{\gamma_r} \HoCr$ is the localization of $\C$ with respect to $\cc{W}$ and that each arrow 
$X \mr{} Y$ 
of $\HoCr$ is given by the class of a single arrow $rX \mr{} rY$ under the equivalence relation $\sim_\cc{W}$.
The reader should be aware that, in the case in which $\C$ is a model category, the relation $\sim_\cc{W}$ doesn't necessarily coincide with Quillen's notion of homotopy as we don't have the Whitehead condition for $\C$, only for $\C_{fc}$.   
That is the reason why this is as far as we can go, for categories with weak equivalences, with the notion of pointwise deformation. 
It is for showing that the relation $\sim_\cc{W}$ coincides with $\sim_{\cc{W}_0}$ when restricted to arrows of $\C_0$ that we will assume the functoriality of $r$, note that the following proposition follows from \cite[3.3,iv)]{DHKS} but we found it pertinent to give a proof.

\begin{proposition} \label{prop:rhoWyrhoW0}
If $r: \C \curvearrowright \C_0$ is a deformation, then for arrows $f,g$ of $\C_0$, 
$f \sim_\cc{W} g$ implies $f \sim_{\cc{W}_0} g$ and thus both relations coincide in $\C_0$.
\end{proposition}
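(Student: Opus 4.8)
The statement to prove is that for a genuine (functorial) deformation $r\colon \C \curvearrowright \C_0$, the relation $f \sim_\cc{W} g$ (for $f,g$ arrows of $\C_0$) implies $f \sim_{\cc{W}_0} g$; the reverse implication is Remark \ref{rem:dosclasesdistintas}, so the two relations agree on $\C_0$. The key point is that functoriality of $r$, together with the natural weak equivalence $\theta$, gives us that the composite functor $\C \mr{r} \C \mr{\gamma_0}$ —more precisely $\C_0 \mr{r|_{\C_0}} \C_0 \mr{\gamma_0} \HoCo$, using $r\C \subseteq \C_0$— sends weak equivalences of $\C$ to isomorphisms. This is where functoriality is genuinely needed: without it we only have a pointwise assignment and cannot form a functor into $\HoCo$.

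\begin{proof}
Consider the composite $\C \mr{r} \C_0 \mr{\gamma_0} \HoCo$, which is a functor since $r$ is a functor with $r\C \subseteq \C_0$. If $f$ is a weak equivalence of $\C$, then $rf$ is a weak equivalence of $\C_0$ by the two out of three property (as noted after Definition \ref{def:pointwisedeformation}), hence $\gamma_0(rf)$ is an isomorphism. Therefore, by the universal property of the localization $\C \mr{\gamma} \HoC$ (Definition \ref{def:construccionzigzags}), there is a unique functor $F\colon \HoC \mr{} \HoCo$ with $F\gamma = \gamma_0 r$. Now suppose $f \sim_\cc{W} g$ for arrows $f,g$ of $\C_0$; by definition this means $\gamma f = \gamma g$, so $\gamma_0(rf) = F\gamma f = F\gamma g = \gamma_0(rg)$, i.e. $rf \sim_{\cc{W}_0} rg$. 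It remains to pass from $rf, rg$ back to $f, g$: the commutative squares in Definition \ref{def:pointwisedeformation} give, for the arrow $f$ of $\C_0$, that $\theta_Y \cdot rf = f \cdot \theta_X$ with $\theta_X, \theta_Y$ weak equivalences of $\C_0$, and likewise for $g$. Applying $\gamma_0$ we get $\gamma_0(\theta_Y)\gamma_0(rf)\gamma_0(\theta_X)^{-1} = \gamma_0 f$ and similarly for $g$; since $\gamma_0(rf) = \gamma_0(rg)$, we conclude $\gamma_0 f = \gamma_0 g$, that is $f \sim_{\cc{W}_0} g$. The reverse implication is Remark \ref{rem:dosclasesdistintas}, so the two relations coincide on $\C_0$.
\end{proof}

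The only subtlety worth flagging is that one must be careful that the squares in Definition \ref{def:pointwisedeformation}, when $f$ is an arrow \emph{of $\C_0$}, are themselves diagrams in $\C_0$ with $\theta_X, \theta_Y$ in $\cc{W}_0$: this is what lets us invert $\gamma_0(\theta_X), \gamma_0(\theta_Y)$ inside $\HoCo$ rather than only inside $\HoC$, and it is exactly what fails in the merely pointwise case discussed in the paragraph before the proposition. Everything else is a routine diagram chase through the universal property of localization, so I do not expect a real obstacle; the essential content is the observation that functoriality of $r$ upgrades the pointwise data into an honest functor $\gamma_0 r$ that inverts weak equivalences.
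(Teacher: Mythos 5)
Your proof is correct and follows essentially the same route as the paper: the paper takes an arbitrary functor $F$ on $\C_0$ inverting weak equivalences, forms $Fr$ on $\C$, and conjugates with the images of the $\theta$'s via the naturality squares, which is exactly your argument specialized to $F=\gamma_0$ (your factorization of $\gamma_0 r$ through $\HoC$ is the universal-property form of the paper's Remark \ref{rem:rhoSigmasinlocalization}). The subtlety you flag about the naturality squares living in $\C_0$ is equally implicit in the paper's proof, so there is no substantive difference.
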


\begin{proof}
Recalling Remark \ref{rem:rhoSigmasinlocalization},
Let $\C_0 \mr{F} \cc{D}$ be a functor which maps the weak equivalences to isomorphisms, 
and consider $\C \mr{Fr} \cc{D}$, thus if $f \sim_\cc{W} g$ we have $Frf = Frg$, and applying $F$ to the diagrams in Definition \ref{def:pointwisedeformation} it follows $Ff = Fg$.
\end{proof}

Combining Lemma \ref{lema:HoCrlengthone} and Proposition \ref{prop:rhoWyrhoW0} we have

\begin{proposition} \label{prop:final}
If $\C$ is deformable into a subcategory $\C_0$
which satisfies Whitehead, then 
the set $\HoCr(X,Y)$ is the set of homotopical classes $\C_0 / \sim_{\cc{W}_0} \!\ (rX,rY)$
\end{proposition}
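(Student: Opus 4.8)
The plan is to assemble the statement directly from the two results just cited, essentially tracking how an arrow of $\HoCr$ is represented and then identifying the representatives up to the right equivalence relation. First I would recall that, since $\C$ is deformable into $\C_0$, Corollary \ref{coro:HoCr} (which, as noted, holds verbatim in the pointwise-deformable and hence in the deformable case) tells us that $\C \mr{\gamma_r} \HoCr$ is the localization of $\C$ with respect to $\cc{W}$, and in particular $\HoCr(X,Y)$ is by Definition \ref{def:HoCr} the set of classes, in $\HoC$, of zigzags of arrows of $\C_0$ from $rX$ to $rY$ whose backward arrows are weak equivalences. So the set in question is a quotient of the set of such zigzags.

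Next I would invoke Lemma \ref{lema:HoCrlengthone}: since $\C_0$ (more precisely $(\C_0,\cc{W}_0)$) is Whitehead, every such zigzag is, in $\HoCr$, equal to a zigzag of length one, i.e. to the class of a single arrow $rX \mr{} rY$ of $\C_0$. Hence the canonical map sending an arrow $h \in \C_0(rX,rY)$ to its class in $\HoCr$ is surjective onto $\HoCr(X,Y)$. It remains to compute when two arrows $h, h' \in \C_0(rX,rY)$ have the same class in $\HoCr$, and to check this is exactly the relation $\sim_{\cc{W}_0}$, which would identify $\HoCr(X,Y)$ with $\C_0/\!\sim_{\cc{W}_0}\!(rX,rY)$.

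For the identification of the relation: by Definition \ref{def:HoCr} the class of $h$ in $\HoCr$ is its class in $\HoC$ as a length-one zigzag, so $h$ and $h'$ have the same class in $\HoCr$ iff $\gamma h = \gamma h'$ in $\HoC$, i.e. iff $h \sim_\cc{W} h'$ (Definition \ref{def:homotencatwe}, and the parenthetical remark after Corollary \ref{modeliswhitehead}). But $h,h'$ are arrows of $\C_0$, and here I would use Proposition \ref{prop:rhoWyrhoW0}: since $r$ is an honest deformation (a functor), $\sim_\cc{W}$ and $\sim_{\cc{W}_0}$ coincide on arrows of $\C_0$. Therefore $h$ and $h'$ have the same class in $\HoCr$ iff $h \sim_{\cc{W}_0} h'$, and the surjection above descends to a bijection $\C_0/\!\sim_{\cc{W}_0}\!(rX,rY) \to \HoCr(X,Y)$.

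The only real subtlety, and the point I would be careful to spell out, is the ``coincidence of representatives'' step: the length-one zigzag produced by Lemma \ref{lema:HoCrlengthone} is only guaranteed to agree with the original zigzag \emph{in $\HoC$} (Remark \ref{rem:dosclasesdistintas} warns that the relations defining $\HoCo$ and $\HoC$ differ on zigzags of $\C_0$), so one must phrase everything in terms of classes in $\HoC$ throughout, and then transfer back to $\sim_{\cc{W}_0}$ only at the very end via Proposition \ref{prop:rhoWyrhoW0} — not prematurely. Aside from that, the argument is a routine diagram-free assembly of the cited lemmas; no genuinely new obstacle arises.
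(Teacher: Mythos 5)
Your argument is correct and is exactly the assembly the paper intends: the paper's own ``proof'' is the single line ``Combining Lemma \ref{lema:HoCrlengthone} and Proposition \ref{prop:rhoWyrhoW0} we have,'' and you have simply spelled out that combination, including the genuine subtlety (flagged in Remark \ref{rem:dosclasesdistintas}) of working with classes in $\HoC$ throughout and only passing to $\sim_{\cc{W}_0}$ at the end via the functoriality of $r$. Nothing to add.
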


Finally, combining
Proposition \ref{prop:splitiswhitehad}, 
 Corollary \ref{coro:HoCr} and Proposition \ref{prop:final} we have:
 
\begin{theorem} \label{theorem}
If $\C$ is deformable into a split-generated subcategory $\C_0$, then 
$\HoCr$ can be constructed with the same objects of $\C$ and with arrows from $X$ to $Y$ the homotopical classes in $\C_0$ from $rX$ to $rY$.
The functor 
$\C \mr{\gamma_r} \HoCr$ 
which is the identity on objects and maps an arrow $f$ to the class of $rf$ is 
the localization of $\C$ with respect to the weak equivalences. \qed
\end{theorem}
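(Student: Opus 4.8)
The plan is to assemble the final Theorem entirely from results already proved in the paper, with essentially no new argument required. The hypothesis is that $\C$ is deformable into a split-generated subcategory $\C_0$. First I would invoke Proposition \ref{prop:splitiswhitehad}, which says that any split-generated category with weak equivalences is Whitehead; applied to $(\C_0,\cc{W}_0)$ this gives that $\C_0$ satisfies the Whitehead condition. This is the one place where the split-generated hypothesis enters, and it reduces the theorem to the situation ``$\C$ deformable into a $\C_0$ which satisfies Whitehead'' already treated by Proposition \ref{prop:final}.

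Next I would apply Corollary \ref{coro:HoCr} (which, as noted in the text, holds verbatim for $\C$ pointwise deformable — hence in particular deformable — into $\C_0$) to conclude that the functor $\C \mr{\gamma_r} \HoCr$ is the localization of $\C$ with respect to $\cc{W}$, and that it is the identity on objects and sends an arrow $f$ to the class of the length-one zigzag $rf$. This already yields the second sentence of the theorem. For the first sentence, I would then quote Proposition \ref{prop:final}: since $\C$ is deformable (not merely pointwise deformable) into the Whitehead subcategory $\C_0$, we have the identification $\HoCr(X,Y) = \C_0/\!\sim_{\cc{W}_0}(rX,rY)$ for every pair of objects $X,Y$. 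Combining this with the description of objects and composition of $\HoCr$ from Definition \ref{def:HoCr} gives exactly the claimed construction: objects are those of $\C$, and $\Hom(X,Y)$ is the set of homotopical classes in $\C_0$ of arrows $rX \mr{} rY$.

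Concretely, the proof reads: By Proposition \ref{prop:splitiswhitehad}, $(\C_0,\cc{W}_0)$ is Whitehead. Hence by Corollary \ref{coro:HoCr} the functor $\C \mr{\gamma_r} \HoCr$ is the localization of $\C$ at $\cc{W}$, described as in the statement; and by Proposition \ref{prop:final}, for all $X,Y$ the homset $\HoCr(X,Y)$ is $\C_0/\!\sim_{\cc{W}_0}(rX,rY)$, which together with Definition \ref{def:HoCr} is precisely the asserted description of $\HoCr$. There is nothing further to verify.

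I do not expect any genuine obstacle here, since every ingredient is already in place; the only point requiring a word of care is to make sure one cites Proposition \ref{prop:final} (which uses the functoriality of $r$, i.e. a genuine deformation) rather than the weaker pointwise version, because the passage from $\HoCr(X,Y)$ to homotopical classes in $\C_0$ rests on Proposition \ref{prop:rhoWyrhoW0}, whose proof uses that $r$ is a functor. Since the hypothesis of the theorem is that $\C$ is deformable (not merely pointwise deformable) into $\C_0$, this causes no difficulty, and the theorem follows by concatenation of the cited results.
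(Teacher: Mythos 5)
Your proposal is correct and follows exactly the paper's own route: the theorem is obtained by concatenating Proposition \ref{prop:splitiswhitehad} (split-generated implies Whitehead), Corollary \ref{coro:HoCr} (the localization statement for $\gamma_r$), and Proposition \ref{prop:final} (identification of the hom-sets with homotopical classes). Your added remark that Proposition \ref{prop:final} requires the functorial (not merely pointwise) deformation, via Proposition \ref{prop:rhoWyrhoW0}, is an accurate reading of the hypotheses and matches the paper's intent.
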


\begin{corollary} \label{coro:Csaturated}
If $(\C,\cc{W})$ is a homotopical category which is deformable into a split-generated subcategory $\C_0$ which satisfies the ``fork" condition, then $(\C,\cc{W})$ is saturated.
\end{corollary}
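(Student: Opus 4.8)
The plan is to reduce the saturation of $(\C,\cc{W})$ to the saturation of $\C_0$, which is already under control through Proposition \ref{prop:forkdasaturated}. So first I would check that $(\C_0,\cc{W}_0)$ itself satisfies the hypotheses of that proposition. It is split-generated and satisfies the ``fork'' condition by assumption, so the only point to verify is that it is a \emph{homotopical} category. Since $\cc{W}_0$ consists of the arrows of $\C_0$ that lie in $\cc{W}$, axioms i) and ii) of Definition \ref{def:catwithweyhomotcat} for $\C_0$ are inherited from those for $\C$; for the weak invertibility axiom iii), given $f$ in $\C_0$ and $g,h$ in $\C_0$ with $fg,hf\in\cc{W}_0\subseteq\cc{W}$, axiom iii) in $\C$ gives $f\in\cc{W}$, and as $f$ lies in $\C_0$ we get $f\in\cc{W}_0$. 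Hence Proposition \ref{prop:forkdasaturated} applies and $(\C_0,\cc{W}_0)$ is saturated.

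Now fix an arrow $X\mr{f}Y$ of $\C$ with $\gamma f$ an isomorphism; I must show $f\in\cc{W}$. By the two out of three observation (Remark \ref{rem:rfwesiifwe}) it suffices to prove $rf\in\cc{W}$, and since $\C_0$ is saturated it suffices to prove that $\gamma_0(rf)$ is an isomorphism in $\HoCo$. To obtain this I transport the isomorphism $\gamma f$ first into $\HoCr$ and then down to $\HoCo$. By Corollary \ref{coro:HoCr} the functor $\HoC\mr{\varphi}\HoCr$ is an isomorphism of categories and satisfies $\varphi\gamma=\gamma_r$; since $\gamma_r f$ is by definition the class $[rf]$ of the length-one zigzag $rf$ (Definition \ref{def:HoCr}), we conclude that $[rf]=\varphi(\gamma f)$ is an isomorphism in $\HoCr$.

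Next I unwind what invertibility of $[rf]$ in $\HoCr$ means. Since $\C_0$ is split-generated it is Whitehead (Proposition \ref{prop:splitiswhitehad}), so Lemma \ref{lema:HoCrlengthone} applies and the inverse of $[rf]$ is represented by a single arrow $rY\mr{h}rX$ of $\C_0$. Recalling that composition and identities in $\HoCr$ are computed in $\HoC$ (Definition \ref{def:HoCr}), with the identity of $X$ being $[id_{rX}]$, the relations $[h][rf]=id_X$ and $[rf][h]=id_Y$ translate into $\gamma(h\cdot rf)=\gamma(id_{rX})$ and $\gamma(rf\cdot h)=\gamma(id_{rY})$, i.e.\ $h\cdot rf\sim_\cc{W} id_{rX}$ and $rf\cdot h\sim_\cc{W} id_{rY}$. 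As all the arrows involved lie in $\C_0$, Proposition \ref{prop:rhoWyrhoW0} (applied successively along the sequence $\C\curvearrowright\C_1\curvearrowright\cdots\curvearrowright\C_0$ of deformations, equivalently via the hom-set identification of Proposition \ref{prop:final}) gives the same relations for $\sim_{\cc{W}_0}$. Thus $\gamma_0(h)$ is a two-sided inverse of $\gamma_0(rf)$ in $\HoCo$, so $\gamma_0(rf)$ is an isomorphism and the chain of reductions above yields $f\in\cc{W}$.

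The step I expect to be the genuine obstacle — and the only place where deformability is essential — is the passage from ``$\gamma f$ invertible in $\HoC$'' to ``$\gamma_0(rf)$ invertible in $\HoCo$''. In general an arrow of $\C_0$ that becomes invertible in $\HoC$ need not become invertible in $\HoCo$, precisely because $\sim_\cc{W}$ and $\sim_{\cc{W}_0}$ differ on $\C_0$ (Remark \ref{rem:dosclasesdistintas}). The deformation is exactly what repairs this: it forces the two homotopy relations to agree on $\C_0$ (Proposition \ref{prop:rhoWyrhoW0}) and lets me replace a priori long zigzags by single arrows of $\C_0$ (Lemma \ref{lema:HoCrlengthone}). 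Everything else is formal bookkeeping with the quotient/localization isomorphisms already established.
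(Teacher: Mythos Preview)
Your proof is correct and follows essentially the same route as the paper's: reduce to $\C_0$ via $\gamma_r(f)\cong\gamma_0(rf)$, invoke Proposition \ref{prop:forkdasaturated} on $\C_0$, then pull back with Remark \ref{rem:rfwesiifwe}. The only difference is packaging: the paper compresses your middle paragraph (Lemma \ref{lema:HoCrlengthone} plus Proposition \ref{prop:rhoWyrhoW0}) into a single ``by definition'' by appealing to the description of $\HoCr$ in Theorem \ref{theorem}, and it leaves implicit the easy check that $(\C_0,\cc{W}_0)$ inherits the homotopical-category axioms, which you spell out.
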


\begin{proof}
For an arrow $f$ of $\C$, by definition if $\gamma_r(f)$ is an isomorphism then so is $\gamma_0(rf)$, thus by Proposition \ref{prop:forkdasaturated} $rf$ is a weak equivalence and by Remark \ref{rem:rfwesiifwe} so is $f$.
\end{proof}

In the model category case we have the deformations $\C \stackrel{Q}{\curvearrowright} \C_c \stackrel{R}{\curvearrowright} \C_{fc}$ and $\C \stackrel{R}{\curvearrowright} \C_f \stackrel{Q}{\curvearrowright} \C_{fc}$
 (see \cite[10.3]{DHKS}), and thus by Example \ref{ex:retrsect} and by the results of \S \ref{sub:construction} and \S \ref{sub:modelcat},  we recover 
from Theorem \ref{theorem} and Corollary \ref{coro:Csaturated} 
 the classical results (recall that in $\C_{fc}$ all the considered notions of homotopy coincide):

\begin{theorem}
The localization of a model category $\C$ with respect to the weak equivalences can be constructed as the category with the same objects of $\C$ and with arrows from $X$ to $Y$ the homotopical classes in $\C_{fc}$ from $RQX$ to $RQY$. The localization functor is the identity on objects and maps an arrow $f$ to the class of $RQf$. \qed
\end{theorem}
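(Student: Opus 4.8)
The plan is to deduce this final theorem purely by specialization of Theorem \ref{theorem} and Corollary \ref{coro:Csaturated} to the two chains of deformations coming from the model structure, checking that each hypothesis used in those general results is met in the model category case. First I would recall that, by \cite[10.3]{DHKS}, the cofibrant replacement $Q$ and the fibrant replacement $R$ furnish deformations $\C \stackrel{Q}{\curvearrowright} \C_c$ and $\C_c \stackrel{R}{\curvearrowright} \C_{fc}$ (and symmetrically $\C \stackrel{R}{\curvearrowright} \C_f \stackrel{Q}{\curvearrowright} \C_{fc}$), so that $\C$ is deformable into $\C_{fc}$ in the sense of the excerpt, with the composite replacement being $RQ$ on arrows and the composite zigzag being the corresponding $\theta_X$ on objects.

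Next I would verify that $\C_{fc}$ is a split-generated category with weak equivalences: this is exactly the content of Example \ref{ex:retrsect}, where axioms {\bf M1} and {\bf M2} are used to factor any weak equivalence between fibrant-cofibrant objects as a section followed by a retraction, both weak equivalences. Having these two ingredients, Theorem \ref{theorem} applies verbatim: $\HoCr$ (with $r = RQ$) can be constructed with the objects of $\C$ and with $\HoCr(X,Y)$ the homotopical classes $\C_{fc}/\!\sim_\cc{W}\,(RQX, RQY)$, and $\C \mr{\gamma_r} \HoCr$, sending $f$ to the class of $RQf$, is the localization of $\C$ at $\cc{W}$. The only remaining point is to identify this abstract homotopy relation $\sim_\cc{W}$ on $\C_{fc}$ with Quillen's classical notion of homotopy: this is precisely what was established by combining Proposition \ref{prop:3congruencescoincide} (the three congruences $\sim_\ell$, $\sim_r$, $\sim_\cc{W}$ coincide for any split-generated $(\C,\cc{W})$, hence in particular in $\C_{fc}$) with Proposition \ref{prop:relationscoincide} (in $\C_{fc}$, $\sim_\ell$ agrees with Quillen's left homotopy $\stackrel{\ell}{\sim}$), and dually on the right. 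Substituting ``homotopical classes in $\C_{fc}$'' by ``classes under Quillen's homotopy relation'' in the statement of Theorem \ref{theorem} then gives the claim.

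The step I expect to be the only real friction is bookkeeping the composite deformation: one must be careful that a finite chain of deformations is again covered by the framework of \S\ref{sub:replacement} --- but this is explicitly addressed there (the definition of $\C$ deformable into $\C_0$ via a finite sequence $\C \stackrel{r_1}{\curvearrowright} \C_1 \cdots \stackrel{r_n}{\curvearrowright} \C_n = \C_0$, with $rf := r_n\cdots r_1 f$ and $\theta_X$ the concatenated zigzag), so there is genuinely nothing new to prove; everything reduces to citing the earlier results. (Corollary \ref{coro:Csaturated} similarly yields, as a by-product, that a model category is saturated, since $\C_{fc}$ satisfies the ``fork'' condition by Remark \ref{rem:commoncylinderenmodel}, although the theorem as stated concerns only the construction of the localization.) Thus the whole proof is a two-line invocation: apply Theorem \ref{theorem} to $\C_0 = \C_{fc}$ and $r = RQ$, then replace $\sim_\cc{W}$ by Quillen's homotopy using Propositions \ref{prop:3congruencescoincide} and \ref{prop:relationscoincide}.
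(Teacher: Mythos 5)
Your proposal is correct and follows exactly the paper's own route: the paper derives this theorem by combining the deformations $\C \stackrel{Q}{\curvearrowright} \C_c \stackrel{R}{\curvearrowright} \C_{fc}$ from \cite[10.3]{DHKS} with Example \ref{ex:retrsect} (split-generation of $\C_{fc}$), Theorem \ref{theorem}, and the identification of $\sim_\cc{W}$ with Quillen's homotopy via \S\ref{sub:construction} and \S\ref{sub:modelcat}. There is nothing to add.
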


\begin{corollary}
Any model category is saturated. \qed
\end{corollary}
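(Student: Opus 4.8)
The plan is to deduce this corollary directly from Corollary \ref{coro:Csaturated} by verifying its three hypotheses for the category with weak equivalences $(\C,\cc{W})$ underlying an arbitrary model category $\C$. Recall that Corollary \ref{coro:Csaturated} asserts saturation for any homotopical category that is deformable into a split-generated subcategory satisfying the ``fork'' condition. So the work consists entirely of checking that a model category provides exactly such a situation, with $\C_0 = \C_{fc}$.

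First I would observe that $(\C,\cc{W})$ is a homotopical category: axioms i) and ii) (identities, two out of three) are part of Quillen's definition, and axiom iii) (weak invertibility) holds in any model category — this is standard, but if one wants to avoid invoking it as a black box one can note it follows from the retract argument, or simply cite it. Second, $\C$ is deformable into $\C_{fc}$: as recalled just before the statement, one has the chain of deformations $\C \stackrel{Q}{\curvearrowright} \C_c \stackrel{R}{\curvearrowright} \C_{fc}$ built from the (functorial) cofibrant and fibrant replacements, each of which is a genuine deformation in the sense of Definition \ref{def:deformation} by \cite[10.3]{DHKS}. Third, $\C_{fc}$ is split-generated: this is precisely the content of Example \ref{ex:retrsect}, where using axioms {\bf M1} and {\bf M2} one factors any weak equivalence between fibrant-cofibrant objects as a section followed by a retraction, both weak equivalences. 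Fourth, $(\C_{fc},\cc{W})$ satisfies the ``fork'' condition: since it is split-generated, by Corollary \ref{coro:commonfork} its homotopy relation $\sim_\cc{W}$ coincides with $R^c_\ell$, and the ``fork'' condition for $\C_{fc}$ amounts to saying that whenever $f\ R_\ell^c\ g$ the defining diagram \eqref{eq:diagramahomotopic} can be chosen with $\alpha$ a weak equivalence — by Remark \ref{rem:lefthomotquillenobjfibcofib} (via \cite[II, Cor. 1.9]{GJ}) one can even take $\alpha = id$, which is a weak equivalence a fortiori; alternatively one can invoke that in $\C_{fc}$ the relation $R_\ell^c$ equals Quillen's $\stackrel{\ell}{\sim}$ by Proposition \ref{prop:relationscoincide}, and the latter is witnessed by cylinder objects whose structure map $\sigma'$ is a weak equivalence with $\alpha = id_A$.

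With all four hypotheses verified, Corollary \ref{coro:Csaturated} applies verbatim to $(\C,\cc{W})$ and yields that it is saturated, which is the assertion. I do not expect any serious obstacle here: the proof is purely a matter of matching the axioms of a model category against the abstract hypotheses assembled over the course of the paper. The only point requiring a moment's care is the ``fork'' condition — one must make sure the common-fork / fork input is phrased for $\C_{fc}$ rather than for all of $\C$ (the paper is explicit that Whitehead-type conditions are available only on $\C_{fc}$, not on $\C$), and that the cylinder-object structure maps supplied by axiom {\bf M2} and by \cite[II, Cor. 1.9]{GJ} are indeed weak equivalences, so that $\alpha = id$ legitimately certifies the condition. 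Everything else is a citation or a direct appeal to a result already established above.

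\begin{proof}
We apply Corollary \ref{coro:Csaturated} to the homotopical category $(\C,\cc{W})$ underlying the model category $\C$, with $\C_0 = \C_{fc}$. Axioms i), ii) of Definition \ref{def:catwithweyhomotcat} hold by Quillen's axioms, and axiom iii) (weak invertibility) holds in any model category. By \cite[10.3]{DHKS} there is a chain of deformations $\C \stackrel{Q}{\curvearrowright} \C_c \stackrel{R}{\curvearrowright} \C_{fc}$, so $\C$ is deformable into $\C_{fc}$. By Example \ref{ex:retrsect}, $(\C_{fc},\cc{W})$ is split-generated. Finally, $(\C_{fc},\cc{W})$ satisfies the ``fork'' condition: by Proposition \ref{prop:relationscoincide} the relation $R_\ell^c$ in $\C_{fc}$ coincides with Quillen's $\stackrel{\ell}{\sim}$, and by Remark \ref{rem:lefthomotquillenobjfibcofib} any such left homotopy can be realized by a diagram \eqref{eq:diagramahomotopic} with $C = A$ and $\alpha = id_A$, which is a weak equivalence. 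Thus all hypotheses of Corollary \ref{coro:Csaturated} are met, and $(\C,\cc{W})$ is saturated.
\end{proof}
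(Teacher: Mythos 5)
Your proof is correct and takes essentially the same route as the paper: the paper also deduces this corollary from Corollary \ref{coro:Csaturated}, using the deformations $\C \stackrel{Q}{\curvearrowright} \C_c \stackrel{R}{\curvearrowright} \C_{fc}$ from \cite[10.3]{DHKS}, Example \ref{ex:retrsect} for split-generation, and the results of \S\ref{sub:construction} and \S\ref{sub:modelcat} (in particular the fork condition for $\C_{fc}$) for the remaining hypotheses. Your write-up simply makes explicit the hypothesis-checking that the paper compresses into the paragraph preceding the statement.
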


\bibliographystyle{unsrt}

\end{document}